\definecolor{refkey}{gray}{.5}   
\definecolor{labelkey}{gray}{.5} 
\numberwithin{equation}{section}
\newcommand{\R}{{\mathbb R}}
\newcommand{\C}{{\mathbb C}}
\newcommand{\N}{{\mathbb N}}
\newcommand{\Z}{{\mathbb Z}}
\newcommand{\T}{{\mathbb T}}
\newcommand{\K}{{\mathbb K}}
\newtheorem{theo}{{\sc \bf Theorem}}[section]
\newtheorem{cor}[theo]{{\sc \bf Corollary}}
\newtheorem{lem}[theo]{{\sc \bf Lemma}}
\newtheorem{prop}[theo]{{\sc \bf Proposition}}
\newenvironment{rem}{\medskip\noindent{\bf Remark:\/} }{\medskip}
\newenvironment{defin}{\medskip\noindent{\bf Definition:\/} }{\medskip}
\begin{document}

\title{Unbounded Derivations in Algebras Associated with Monothetic Groups}

\author[Klimek]{Slawomir Klimek}
\address{Department of Mathematical Sciences,
Indiana University-Purdue University Indianapolis,
402 N. Blackford St., Indianapolis, IN 46202, U.S.A.}
\email{sklimek@math.iupui.edu}

\author[McBride]{Matt McBride}
\address{Department of Mathematics and Statistics,
Mississippi State University,
175 President's Cir., Mississippi State, MS 39762, U.S.A.}
\email{mmcbride@math.msstate.edu}

\thanks{}

\date{\today}

\begin{abstract}
Given an infinite, compact, monothetic group $G$ we study decompositions and structure of unbounded derivations in a crossed product C$^*$-algebra $C(G)\rtimes\Z$ obtained from a translation on $G$ by a generator of a dense cyclic subgroup. We also study derivations in a Toeplitz extension of the crossed product and the question whether unbounded derivations can be lifted from one algebra to the other.
\end{abstract}

\maketitle
\section{Introduction}

Derivations naturally arise in studying differentiable manifolds, in representation theory of Lie groups and in their noncommutative analogs. They also appear in mathematical aspects of quantum mechanics, in particular in quantum statistical physics. Additionally, derivations are important in analyzing amenability and other structures of operator algebras. Good overviews are in \cite{B} and also in \cite{S}.

In this paper we study classification and decompositions of unbounded derivations in C$^*$-algebras associated to an infinite, compact, monothetic group $G$, which, by definition, is a Hausdorff topological group with a dense cyclic subgroup. A group translation on $G$ by a generator of a cyclic subgroup is a minimal homeomorphism and one algebra associated with $G$ is the crossed product C$^*$-algebra $B:=C(G)\rtimes\Z$ determined by the translation. This algebra can be naturally represented in the $\ell^2$-Hilbert space of the full orbit. If we consider the analogous algebra on the forward orbit only, we obtain a Toeplitz extension $A$ of the algebra $B$. When the group is totally disconnected those algebras are precisely Bunce-Deddens and Bunce-Deddens-Toeplitz algebras considered in \cite{KMRSW2}.

The main objects of study in this paper are unbounded derivations $d: \mathcal A \rightarrow A$ which are defined on a subalgebra $\mathcal A$ of polynomials in generators of $A$.  Similarly, we study derivations $\delta: \mathcal B \rightarrow B$, where $\mathcal B$ is the image of $\mathcal A$ under the quotient map $A\to A/ \mathcal K = B$. The first of the main results of this paper is that any derivation in those algebras can be uniquely decomposed into a sum of a certain special derivation and an approximately inner derivation. The special derivations are not approximately inner, and can be explicitly described.

It turns out that any derivation $d: \mathcal A \rightarrow A$  preserves the ideal of compact operators $\mathcal K$ and consequently defines a factor derivation $[d]: \mathcal B \rightarrow B$ in $B$. It is an interesting and non-trivial problem to describe properties of the map $d\mapsto [d]$. For any C$^*$-algebra it is easy to see that bounded derivations preserve closed ideals and so they define derivations on quotients. It was proved in \cite{P} that for bounded derivations and separable C$^*$-algebras the above map is onto, i.e. derivations can be lifted from quotients. In non-separable cases this is not true in general. We prove here that lifting unbounded derivations from $B$ to $A$ is always possible when $G$ is totally disconnected, answering positively a conjecture in \cite{KMRSW2}. However we give a simple counterexample of a special derivation in the algebra $B$ for $G=\mathbb T^1$ that cannot be lifted to a derivation in the algebra $A$. Instead, we conjecture that for any compact, infinite, monothetic group approximately inner derivations in $B$ can be lifted to approximately inner derivation in $A$.

The paper is organized as follows.  In section 2 we review monothetic groups and discuss their properties.  We also describe a crossed product C$^*$-algebra that is associated to a monothetic group and that algebra Toeplitz extension,  as well as discuss a Toeplitz map from one algebra to another.  In section 3 we classify all unbounded derivations on polynomial domains in the C$^*$-algebras from section 2.  Finally, in section 4 we consider lifting derivations from a crossed product C$^*$-algebra to its Toeplitz extension. We prove that all derivations can be lifted for totally disconnected, compact, infinite, monothetic groups and provide an example that shows that not all derivations can be lifted in general.

\section{Monothetic Groups and Associated C$^*$-algebras}
\subsection{Monothetic Groups}
A topological (Hausdorff) group is called {\it monothetic} if it has a dense cyclic subgroup.  Andr{\'e} Weil observed, Theorem 19 of \cite{M}, that if $G$ is a locally compact monothetic group, then $G\cong\Z$ or $G$ is compact.  In this paper we only consider the case of compact $G$.  It follows immediately that $G$ is Abelian and separable.  We first describe the structure of such groups following \cite{HS}.  The key tool is the character (dual) group and Pontryagin duality, which translates properties of groups into properties of their duals.

Let $S^1$ be the unit circle:
\begin{equation*}
S^1 = \{z\in\C : |z|=1\},
\end{equation*} 
and let $\widehat{G}$ denote the dual group $G$, the group of continuous homomorphisms from $G$ to $S^1$ equipped with compact-open topology.  It is well known that if $G$ is compact then $\widehat{G}$ is discrete.  

We typically use additive notation for an abelian group, however we use multiplicative notation for the dual group.  Given a monothetic group $G$, let $x_1$ be a generator of a dense cyclic subgroup, and we set $x_n=nx_1$ for $n\in\Z$, so that $x_0:=0$ is the neutral element of $G$. Then we can identify the dual group $\widehat{G}$ of $G$ with a discrete subgroup of $S^1$ via the map given by:
\begin{equation*}
\widehat{G}\ni\chi\mapsto \chi(x_1)\in S^1.
\end{equation*}  
Conversely, using Pontryagin duality, if $H$ is a discrete subgroup of $S^1$, then $H$ is the dual group of a compact monothetic group, namely $\widehat{H}$, see \cite{HS}.

To better understand the structure of monothetic groups we look at the torsion subgroup of its dual group.  Given a monothetic $G$, the torsion subgroup of $\widehat{G}_{\textrm{tor}}$ of $\widehat{G}$ is given by:
\begin{equation*}
\widehat{G}_{\textrm{tor}} = \{\chi\in \widehat{G} : \chi^n =1\textrm{ for some }n\in\N\}.
\end{equation*}
There are two extreme cases: we say $\widehat{G}$ is of pure torsion if $\widehat{G} = \widehat{G}_{\textrm{tor}}$.  We also say $\widehat{G}$ is torsion free if $\widehat{G}_{\textrm{tor}} =\{0\}$. 
The following statements describe basic properties of monothetic groups.  We provide short or outlined proofs with references. A good, concise book on Pontryagin duality is \cite{M}.

First we look at the case of torsion free $\widehat{G}$.  

\begin{prop}
Let $G$ be a compact monothetic group.  $G$ is connected if and only if $\widehat{G}$ is torsion free.
\end{prop}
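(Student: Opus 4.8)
The plan is to prove the two implications separately, using that the torsion elements of $\widehat{G}$ are precisely the continuous characters of finite order, together with standard facts from Pontryagin duality and the structure theory of compact groups.

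For the forward implication I would argue directly. Assume $G$ is connected and let $\chi \in \widehat{G}$ satisfy $\chi^n = 1$ for some $n \in \N$. Then the image $\chi(G)$ is contained in the finite set $\{z \in S^1 : z^n = 1\}$ of $n$-th roots of unity, which carries the discrete topology. Since $\chi$ is continuous and $G$ is connected, $\chi(G)$ is a connected subset of a discrete space, hence a single point; as $\chi(x_0) = \chi(0) = 1$, this forces $\chi \equiv 1$. Therefore $\widehat{G}_{\mathrm{tor}} = \{1\}$ and $\widehat{G}$ is torsion free. This direction is entirely elementary and uses nothing beyond the continuity of characters.

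For the converse I would argue by contraposition: assuming $G$ is not connected, I will produce a nontrivial character of finite order. Let $G_0$ denote the identity component, a proper closed subgroup of $G$. The quotient $G/G_0$ is then a nontrivial compact, totally disconnected abelian group, hence profinite, and so it admits a continuous surjection onto a nontrivial finite abelian group $F$. Choosing any nontrivial character $\psi$ of $F$ and composing with the projections $G \to G/G_0 \to F$ produces $\chi \in \widehat{G}$ with $\chi \neq 1$ and $\chi^{|F|} = 1$, i.e. a nontrivial torsion element. Contrapositively, torsion freeness of $\widehat{G}$ forces $G$ to be connected.

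The main obstacle is the structural input underlying the converse, namely that a nontrivial compact totally disconnected abelian group has a nontrivial finite quotient (equivalently, that in a compact group the identity component is the intersection of the open subgroups). This is the one place where the argument is not self-contained, and I would invoke van Dantzig's theorem and the standard Pontryagin-duality facts, citing \cite{HS} and \cite{M}. Everything else reduces to the single observation that a continuous homomorphism into $S^1$ with connected domain and finite image must be trivial.
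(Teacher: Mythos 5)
Your proof is correct, but it takes a genuinely different route from the paper's, whose entire proof is a citation: the authors simply invoke Corollary 4 of Theorem 30 of \cite{M}, remarking that the result needs only that $G$ be compact and Abelian. Your argument supplies the mathematics behind such a statement directly. The forward direction --- a character with $\chi^n=1$ has image in the finite, discrete set of $n$-th roots of unity, so connectedness of $G$ forces $\chi\equiv 1$ --- is exactly the elementary observation one wants, and as you note it uses nothing but continuity (not even the Abelian or monothetic hypotheses). The converse is where the substance lies: passing to the totally disconnected compact quotient $G/G_0$, invoking profiniteness (van Dantzig's theorem, equivalently that in a compact group $G_0$ is the intersection of the open subgroups) to get a nontrivial finite Abelian quotient $F$, and pulling back a nontrivial character of $F$ to a nontrivial torsion element of $\widehat{G}$. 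That structural input is of comparable depth to the duality theory the paper outsources to \cite{M}, so neither proof is more elementary in any absolute sense; what yours buys is a clean separation of the trivial direction from the one carrying real content, and a proof that is self-contained except for the single, clearly flagged structural fact. What the paper's citation buys is brevity and consistency, since adjacent propositions in the same section (total disconnectedness versus pure torsion, and the structure of $G_0$ and $G/G_0$) are all derived from the same Theorem 30 of \cite{M}, whereas your method would prove those by separate ad hoc arguments.
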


\begin{proof}
This is Corollary 4 of Theorem 30 of \cite{M}, which only requires $G$ to be compact, Abelian. 
\end{proof}

We have the following remarkable result proved in \cite{HS}. 

\begin{theo}\label{con_com_sep_mono}
Every connected compact separable Abelian topological group is monothetic.
\end{theo}

The $n$-dimensional torus, $\T^n = \R^n/\Z^n$ is an example of a compact, connected, separable, Abelian group and thus by Theorem \ref{con_com_sep_mono} is monothetic.  Consider an element 
$$x_1=(\theta_1,\ldots,\theta_n)$$ 
of $\T^n.$  Then the cyclic subgroup generated by $x_1$ is dense in $\T^n$  if and only if $\{1,\theta_1,\ldots,\theta_n\}$ are linearly independent over $\Z$, see for example \cite{KH}.

Next we consider the case when $\widehat{G}$ is of pure torsion.
\begin{prop}
Let $G$ be a compact monothetic group.  $G$ is totally disconnected if and only if $\widehat{G}$ is of pure torsion.
\end{prop}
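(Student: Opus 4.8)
The plan is to prove both implications directly, using two standard structural facts together with the character theory of compact abelian groups. Throughout write $G_0$ for the connected component of the identity in $G$; since $G$ is a topological group it is homogeneous, so $G$ is totally disconnected if and only if $G_0=\{0\}$. The key additional inputs are that continuous characters separate the points of a compact abelian group (Pontryagin duality, already in play above), and that a compact totally disconnected abelian group is profinite, i.e. admits a neighborhood basis of the identity consisting of open subgroups (van Dantzig's theorem).

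For the easier direction, suppose $\widehat{G}$ is of pure torsion. Let $g\in G_0$ with $g\neq 0$; by duality there is $\chi\in\widehat{G}$ with $\chi(g)\neq 1$. Since $\chi$ has finite order, its image is a finite subgroup of $S^1$, so $\chi(G_0)$ is a connected subset of a finite (hence discrete) group containing $\chi(0)=1$, forcing $\chi(G_0)=\{1\}$ and in particular $\chi(g)=1$, a contradiction. Hence $G_0=\{0\}$ and $G$ is totally disconnected.

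For the converse, suppose $G$ is totally disconnected. By van Dantzig's theorem $G$ has a basis of open subgroups at $0$. Given any $\chi\in\widehat{G}$, I would use the no-small-subgroups property of $S^1$: fix a neighborhood $V$ of $1$ in $S^1$ containing no nontrivial subgroup. Then $\chi^{-1}(V)$ is an open neighborhood of $0$, so it contains an open subgroup $U$. The image $\chi(U)$ is a subgroup of $S^1$ lying in $V$, hence $\chi(U)=\{1\}$, so $U\subseteq\ker\chi$. Thus $\ker\chi$ is open, $G/\ker\chi$ is finite, and $\chi$ has finite order. Therefore $\widehat{G}=\widehat{G}_{\textrm{tor}}$.

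I expect the second implication to be the only real obstacle: it is where the genuine content sits, namely promoting ``totally disconnected'' to the profinite structure (van Dantzig) and exploiting that $S^1$ has no small subgroups to conclude every character factors through a finite quotient. An alternative, more in the spirit of the preceding proposition, would be to show that $\widehat{G}_{\textrm{tor}}$ coincides with the annihilator $A(G_0)=\{\chi:\chi|_{G_0}=1\}$ and then read off the equivalence from $G_0=\{0\}\Leftrightarrow A(G_0)=\widehat{G}$; but identifying $A(G_0)$ with the torsion subgroup again reduces to exactly the same no-small-subgroups argument applied to the profinite quotient $G/G_0$, so the direct route above is the most economical.
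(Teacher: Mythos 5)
Your proof is correct, but it takes a genuinely different route from the paper's. The paper disposes of the proposition in one line by citing Corollary 1 of Theorem 30 of \cite{M}: for a compact abelian group, $G$ is totally disconnected if and only if every element of $\widehat{G}$ is a compact element (i.e.\ generates a subgroup with compact closure), and in a discrete group the compact elements are exactly the elements of finite order. You instead give a self-contained argument: in one direction a torsion character has finite image, so by connectedness it kills $G_0$, and separation of points by characters then forces $G_0=\{0\}$; in the other direction you invoke van Dantzig's theorem to get a neighborhood basis of open subgroups and the no-small-subgroups property of $S^1$ to conclude that every character has open kernel, hence finite index kernel in the compact group $G$, hence finite order. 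Your bookkeeping is sound throughout: the reduction of total disconnectedness to $G_0=\{0\}$ via homogeneity, the openness of $\ker\chi$ from containing an open subgroup, and the finiteness of $G/\ker\chi$ by compactness are all correct. What your route buys is transparency: it exposes the actual mechanism (torsion characters annihilate connected sets; profinite structure forces characters to factor through finite quotients) and makes plain that monotheticity is irrelevant, the equivalence holding for every compact abelian group. What the paper's route buys is brevity, outsourcing both directions to the structure theory of locally compact abelian groups it is already quoting; your closing remark about identifying $\widehat{G}_{\textrm{tor}}$ with the annihilator $A(G_0)$ is also consistent with how the paper later proves its Proposition 2.7, so the two approaches reconnect there.
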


\begin{proof}
This result follows for example from Corollary 1 of Theorem 30 of \cite{M}, since an element of the discrete group $\widehat{G}$ is compact (i.e. the smallest closed subgroup containing it is compact) if and only if it has finite order.
\end{proof}

Before we state the next structural result we need to introduce odometers. Further details on odometers can be found in \cite{D}.
The standard definition of an odometer (that inspired the name) uses a sequence of positive integers $b:=(b_m)_{m\in\N}$ such that $b_m\ge2$ for all $m$, called a multibase. The odometer is then identified (as a set) with the direct product:
$$G(b):=\prod_m \Z/b_m\Z,$$ 
but addition is defined with the carry over rule. Equipped with the product topology $G(b)$ becomes a compact, totally disconnected topological group. It is easy to see that the cyclic subgroup generated by
$x_1=(1,0,0,0,\ldots)$
is dense and so $G(b)$ is a monothetic group.

An alternative representation of the odometer $G(b)$ uses scales, and this is the description that is used in the proof of Theorem \ref{lift_theo}. Let $s = (s_m)_{m\in\N}$ be a sequence of positive integers such that $s_m$ divides $s_{m+1}$ and $s_m<s_{m+1}$.  There are natural homomorphisms between the consecutive finite cyclic groups $\Z/{s_{m+1}}\Z \to \Z/s_m\Z$, namely congruence modulo $s_m$. Thus the inverse limit:
$$G_s = \lim_{\underset{m\in \mathcal \N}{\longleftarrow}} \mathbb Z/ s_m\mathbb Z$$
is well defined as the subset of the countable product $\prod_m \Z/s_m\Z$ consisting of sequences $(y_1, y_2, y_3, \ldots)$ such that $y_{m+1}\equiv y_m\ (\textrm{mod } s_m)$. Addition in this representation is coordinate-wise, modulo $s_m$ in each coordinate $m$. $G_s$ becomes a topological group when endowed with the product topology over the discrete topologies in $\Z/s_m\Z$. Obviously, with our assumptions, this group is infinite because $s$ is unbounded. 

The relation between the two definitions of an odometer is as follows. Given a multibase $b=(b_m)_{m\in\N}$ define a scale $s= (s_m)_{m\in\N}$ by $s_1=b_1$, $s_2=b_1g_2$, $s_3=b_1b_2b_3$ and so on. Equivalently, we have:
$$s_1=b_1, \ b_n=\frac{s_n}{s_{n-1}} \textrm{ for } n>1.$$ 
Then the map
$$G(b)\ni(k_1,k_2,k_3,\ldots)\mapsto(k_1, k_1+k_2b_1, k_1+k_2b_1+k_3b_1b_2, \dots)\in G_s
$$
gives an isomorphism of the groups. In the scales representation of odometers the generator $x_1$ of a cyclic subgroup is given by $x_1=(1,1,1,1,\ldots).$

With the above definitions it is not transparent when two odometers are isomorphic, so we describe yet another way to define odometers that we used in \cite{KMRSW2}.  A {\it supernatural number} $N$ is defined as the formal product: 
\[N= \prod_{p-\textnormal{prime}} p^{\epsilon_p}, \;\;\; \epsilon_p \in\{0,1, \cdots, \infty\}.\]
If $\sum \epsilon_p < \infty$ then $N$ is said to be a finite supernatural number (a regular natural number), otherwise it is said to be infinite.  If 
$$N'= \prod_{p-\textnormal{prime}} p^{\epsilon_p'}$$ 
is another supernatural number, then their product is given by:
\[NN'= \prod_{p-\textnormal{prime}} p^{\epsilon_p + \epsilon_p'}.\]
A supernatural number $N$ is said to divide $M$ if $M=NN'$ for some supernatural number $N'$, or equivalently, if $\epsilon_p(N) \leq \epsilon_p(M)$ for every prime $p$.

Given a supernatural number $N$ let $\mathcal J_N$ be the set of finite divisors of $N$:
\[\mathcal J_N=\{j: \; j|N, j<\infty\}.\]
Then  $(\mathcal J_N, \leq)$ is a directed set where $j_1 \leq j_2$ if and only if $j_1 | j_2 |N$.
Consider the collection of  cyclic groups $\left\{\mathbb Z/ j\mathbb Z\right\}_{j \in \mathcal J_N}$ and the family of group homomorphisms
\[\begin{aligned}\pi_{ij}: \mathbb Z/ j\mathbb Z&\rightarrow \mathbb Z/ i\mathbb Z, \;\;\;\; j\geq i\\
\pi_{ij}(z)&=z\ (\textrm{mod } i)\end{aligned}\]
satisfying 
\[\pi_{ik} = \pi_{ij} \circ \pi_{jk} \textnormal{ for all } i \leq j \leq k.\]
Then the inverse limit of this system can be denoted as:
\[\mathbb Z/N\mathbb Z:=\lim_{\underset{j\in \mathcal J_N}{\longleftarrow}} \mathbb Z/ j\mathbb Z
=\left\{(z_j)\in\prod\limits_{j\in \mathcal J_N}\mathbb Z/ j\mathbb Z : \pi_{ij}(z_j)=z_i\right\}.\]
In particular, if $N$ is finite the above definition coincides with the usual meaning of the symbol $\mathbb Z/N\mathbb Z$, while if $N=p^\infty$ for a prime p, then the above limit is equal to $\mathbb Z_p$, the ring of $p$-adic integers, see for example \cite{Robert}. 

Given a scale $s= (s_m)_{m\in\N}$ we define the corresponding supernatural number $N$ to be the ``limit" of $s_m$:
\begin{equation}\label{N_limit}
N=\lim_{m\to\infty}s_m,
\end{equation}
in the sense that each prime exponent $\epsilon_p(N)$ of $N$ is defined to be the supremum of the prime exponents $\epsilon_p(s_m)$, $m\in\N$. It follows that $s_m$'s are divisors of $N$ and for every $j\in \mathcal J_N$ there is a natural number $m(j)$ such that $j|s_{m(j)}$. Consequently, a sequence $(z_j)\in \lim\limits_{\underset{j\in \mathcal J_N}{\longleftarrow}} \mathbb Z/ j\mathbb Z$ is completely determined by the subsequence $(z_{s_m})\in \lim\limits_{\underset{m\in \mathcal \N}{\longleftarrow}} \mathbb Z/ s_m\mathbb Z$, which gives an isomorphism $\Z/N\Z\cong G_s$. It turns out that odometers are classified by the supernatural number $N$, see \cite{D}. As before, 
$$x_1=(1,1,1,1,\ldots)\in\Z/N\Z$$ generates a dense cyclic subgroup.

In general, we have the following simple consequence of the Chinese Reminder Theorem:
if $N= \prod\limits_{\substack{p-\textnormal{prime} \\ {\epsilon_p \neq 0}}} p^{\epsilon_p}$, then 
$$ \mathbb Z/N\mathbb Z \cong \prod\limits_{\substack{p-\textnormal{prime} \\ {\epsilon_p \neq 0}}}  \mathbb Z/ {p^{\epsilon_p}}\mathbb Z.$$ 

Since the space $ \mathbb Z/N\mathbb Z $ is a compact, abelian topological group, it has a unique normalized Haar measure $\mu$. Also, if $N$ is an infinite supernatural number then $ \mathbb Z/N\mathbb Z$ is a Cantor set \cite{W}. 

We are now ready to state the next structural result about compact monothetic groups.

\begin{prop}\label{tot_disc_mono}
$G$ is a compact, totally disconnected, monothetic group if and only if it is an odometer. In particular, there exist a supernatural number $N$ such that
$$G\cong \Z/N\Z\cong \prod_i \Z/p_i^{\epsilon_i}\Z,$$ 
where $N = \prod_{i=1}^\infty p_i^{\epsilon_i}.$
\end{prop}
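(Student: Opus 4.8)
The plan is to prove both implications, using the duality dictionary set up above together with the structure theory of subgroups of the circle. For the \emph{if} direction, I would observe that the construction already exhibits the three required properties for any supernatural number $N$: realized as the inverse limit of the finite groups $\Z/j\Z$, $j\in\mathcal J_N$, the odometer $\Z/N\Z$ is a closed subgroup of the compact product $\prod_{j}\Z/j\Z$, hence compact; since each $\Z/j\Z$ carries the discrete topology the product is totally disconnected, and so is its closed subset $\Z/N\Z$; and, as already noted, $x_1=(1,1,1,\ldots)$ generates a dense cyclic subgroup, so $\Z/N\Z$ is monothetic. Thus every odometer is compact, totally disconnected and monothetic.

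For the \emph{only if} direction, suppose $G$ is compact, totally disconnected and monothetic. By the preceding proposition characterizing totally disconnected monothetic groups, $\widehat G$ is of pure torsion, and under the identification $\chi\mapsto\chi(x_1)$ it is a discrete subgroup of $S^1$ all of whose elements are roots of unity. Hence $\widehat G$ is a subgroup of the group $\mu_\infty$ of all roots of unity, which is isomorphic to $\Q/\Z$. I would then split $\widehat G$ into its $p$-primary components, writing $\widehat G\cong\bigoplus_p\widehat G_p$, where $\widehat G_p$ collects the elements of $p$-power order. Each $\widehat G_p$ is a subgroup of the Pr\"ufer group $\Z(p^\infty)\subset\mu_\infty$, and since the subgroups of $\Z(p^\infty)$ form the chain $0\subset\Z/p\Z\subset\Z/p^2\Z\subset\cdots\subset\Z(p^\infty)$, either $\widehat G_p\cong\Z/p^{\epsilon_p}\Z$ for some finite $\epsilon_p$, or $\widehat G_p=\Z(p^\infty)$, in which case I set $\epsilon_p=\infty$. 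This assignment defines a supernatural number $N=\prod_p p^{\epsilon_p}$.

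Finally I would recover $G$ by Pontryagin duality. Since $G$ is compact abelian, $G\cong\widehat{\widehat G}$, and dualizing the direct sum turns it into a product, $G\cong\prod_p\widehat{\widehat G_p}$. Using the standard identifications $\widehat{\Z/p^{k}\Z}\cong\Z/p^{k}\Z$ for finite $k$ and $\widehat{\Z(p^\infty)}\cong\Z_p=\Z/p^\infty\Z$, I obtain $G\cong\prod_p\Z/p^{\epsilon_p}\Z$, which by the Chinese Remainder Theorem recorded above is isomorphic to $\Z/N\Z$. Hence $G$ is precisely the odometer attached to $N$, which completes the proof.

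The routine part is the \emph{if} direction and the bookkeeping in the primary decomposition; the main obstacle is the \emph{only if} direction, specifically establishing that the torsion subgroup $\widehat G$ splits as the direct sum of its primary components with each component a subgroup of $\Z(p^\infty)$, and then dualizing correctly. The delicate identification here is $\widehat{\Z(p^\infty)}\cong\Z_p$, together with the need to check that all these isomorphisms are topological (so that the product topology on $\prod_p\Z/p^{\epsilon_p}\Z$ matches the topology of $\Z/N\Z$); this is where the real content of the argument lies, the rest being a matter of assembling known facts about Pontryagin duality.
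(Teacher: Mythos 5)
Your proposal is correct, but it takes a genuinely different route from the paper. The paper's proof of this proposition is essentially a citation: it invokes the structure theorem of Halmos and Samelson \cite{HS} (the discussion between Theorems II$'$ and III on pages 256--257), which directly asserts that a compact, totally disconnected, monothetic group is a product of cyclic $p$-groups and groups of $p$-adic integers; the ``if'' direction is left implicit in the paper's preceding discussion of odometers. You instead give a self-contained argument: the earlier proposition identifies $\widehat{G}$ with a pure-torsion subgroup of $S^1$, hence a subgroup of $\Q/\Z$; primary decomposition together with the fact that the subgroups of the Pr\"ufer group $\Z(p^\infty)$ form a chain gives $\widehat{G}\cong\bigoplus_p \widehat{G}_p$ with each $\widehat{G}_p$ either finite cyclic of $p$-power order or all of $\Z(p^\infty)$; and dualizing --- using $G\cong\widehat{\widehat{G}}$, the fact that duality turns discrete direct sums into compact products, and the identifications $\widehat{\Z/p^k\Z}\cong\Z/p^k\Z$ and $\widehat{\Z(p^\infty)}\cong\Z_p$ --- recovers $G\cong\prod_p\Z/p^{\epsilon_p}\Z\cong\Z/N\Z$. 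All of these ingredients are classical facts about Pontryagin duality (available, e.g., in \cite{M}), and the delicate points you flag (that the isomorphisms are topological, and the computation of $\widehat{\Z(p^\infty)}$) are genuine but standard, so there is no gap. In effect you reconstruct the Halmos--Samelson argument rather than citing it: your route buys transparency and self-containedness, and it also treats the ``if'' direction explicitly (closedness of the inverse limit in the compact, totally disconnected product, plus density of the cyclic subgroup generated by $x_1$), which the paper does not spell out; the paper's route buys brevity. One cosmetic remark: both you and the paper gloss over the fact that a finite cyclic group is compact, totally disconnected and monothetic yet is an ``odometer'' only in the degenerate sense of a finite supernatural number $N$; your duality argument actually covers that case uniformly.
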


\begin{proof}
Let $G$ be a compact totally disconnected monothetic group.  In \cite{HS}, between Theorems $II'$ and $III$ on pages 256-257, the authors show that $G$ is isomorphic to a direct product of groups $G_{p_i}$ where $p_i$ runs over all primes and where $G_{p_i}$ isomorphic to the zero group, the cyclic group of order $p_i^{\epsilon_i}$ for some $\epsilon_i$ or the group of $p$-adic integers, the last case corresponds to $\epsilon_i=\infty$.
\end{proof}

In general, for arbitrary $\widehat{G}$ we have the following structure for compact monothetic groups.

\begin{prop}
Let $G$ be a compact monothetic group.  If $G_0\le G$ is the connected component of the neutral element $0$, then $G_0$ is a connected separable compact Abelian group and $G/G_0$ is a totally disconnected monothetic group.  Moreover, there are natural isomorphisms:
\begin{equation*}
\widehat{\left(G/G_0\right)}\cong \widehat{G}_{\textrm{tor}}\textrm{ and }\widehat{G_0}\cong \widehat{G}/\widehat{G}_{\textrm{tor}}
\end{equation*}
\end{prop}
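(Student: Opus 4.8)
The plan is to first dispatch the purely topological assertions and then derive the two isomorphisms from Pontryagin duality together with the two preceding Propositions.

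First I would record the elementary structure of $G_0$. As the connected component of the identity in a topological group, $G_0$ is a closed (hence compact) normal subgroup, and it is connected and Abelian by inheritance from $G$. For separability, I would observe that, as noted above, $\widehat{G}$ is identified with a discrete subgroup of $S^1$; since $S^1$ is second countable, a discrete subspace of it is countable, so $\widehat{G}$ is countable and therefore $G$ is metrizable. A closed subspace of a compact metric space is again compact metric, hence separable, so $G_0$ is a connected separable compact Abelian group. For the quotient, $G/G_0$ is compact as a quotient of a compact group by a closed subgroup; it is totally disconnected by the standard fact that the quotient of a compact group by the connected component of its identity is totally disconnected; and it is monothetic because the quotient map $q\colon G\to G/G_0$ is continuous and surjective, so the image $q(x_1)$ of the generator generates a dense cyclic subgroup.

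Next I would invoke the duality between closed subgroups and quotients. Writing $G_0^{\perp}=\{\chi\in\widehat{G}:\chi|_{G_0}=1\}$ for the annihilator, Pontryagin duality supplies the natural isomorphisms
\begin{equation*}
\widehat{\left(G/G_0\right)}\cong G_0^{\perp}\qquad\textrm{and}\qquad \widehat{G_0}\cong \widehat{G}/G_0^{\perp}.
\end{equation*}
Thus the whole Proposition reduces to the single identification $G_0^{\perp}=\widehat{G}_{\textrm{tor}}$, and the naturality of the claimed isomorphisms is inherited from the naturality of the duality.

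To prove that identification I would use the two characterizations established just above. Since $G/G_0$ is totally disconnected, its dual $\widehat{\left(G/G_0\right)}\cong G_0^{\perp}$ is of pure torsion, so $G_0^{\perp}\subseteq\widehat{G}_{\textrm{tor}}$. Conversely, since $G_0$ is connected, its dual $\widehat{G_0}\cong\widehat{G}/G_0^{\perp}$ is torsion free; hence any torsion element of $\widehat{G}$ maps to a torsion element of the quotient, which must therefore be trivial, forcing $\widehat{G}_{\textrm{tor}}\subseteq G_0^{\perp}$. Combining the two inclusions yields $G_0^{\perp}=\widehat{G}_{\textrm{tor}}$, and substituting into the displayed isomorphisms finishes the proof. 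The routine verifications are all imported from standard duality theory and from the two preceding Propositions, so I expect no serious obstacle; the one point requiring care is the appeal to the general duality statement $\widehat{G/H}\cong H^{\perp}$, $\widehat{H}\cong\widehat{G}/H^{\perp}$ for an arbitrary closed subgroup $H$, together with the fact that $G/G_0$ is totally disconnected, after which the torsion bookkeeping is immediate.
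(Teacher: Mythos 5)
Your proof is correct and follows essentially the same route as the paper: identify the annihilator $A(G_0)=G_0^{\perp}$ with $\widehat{(G/G_0)}$ via Pontryagin duality and prove $G_0^{\perp}=\widehat{G}_{\textrm{tor}}$ by two inclusions using the two preceding Propositions (connected $\Leftrightarrow$ torsion-free dual, totally disconnected $\Leftrightarrow$ pure-torsion dual). The only cosmetic differences are that you spell out the topological preliminaries (separability of $G_0$, monotheticity of $G/G_0$) which the paper leaves to the earlier results, and you phrase the reverse inclusion in the quotient $\widehat{G}/G_0^{\perp}$ rather than by restricting characters to $G_0$ --- the same argument under the duality isomorphism.
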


\begin{proof}
This proposition is not formally stated but appears as a note in \cite{HS}, see also Corollary 3 of Theorem 30 of \cite{M}.  The first part follows from the previous propositions.  Recall that the annihilator $A(G_0)$ of $G_0$ is given by:
\begin{equation*}
A(G_0) = \{\chi\in \widehat{G} : \chi(g)=1,\textrm{ for all }g\in G_0\} .
\end{equation*}
By Pontryagin duality, Theorem 27 of \cite{M}, we have:
\begin{equation*}
A(G_0)\cong \widehat{\left(G/G_0\right)} .
\end{equation*}
Notice the right-hand side of the equation is Abelian, discrete and of pure torsion.  Thus given $\chi\in A(G_0)$, it defines a character class
\begin{equation*}
[\chi]: G/G_0\to S^1 .
\end{equation*}
Therefore, we have: 
\begin{equation*}
[\chi]\in\widehat{\left(G/G_0\right)} = \widehat{\left(G/G_0\right)}_{\textrm{tor}} ,
\end{equation*}
hence $\chi$ has finite order and thus $A(G_0)\le \widehat{G}_{\textrm{tor}}$.  Let $\chi\in\widehat{G}_{\textrm{tor}}$, then 
\begin{equation*}
\left.\chi\right|_{G_0}\in(\widehat{G_0})_{\textrm{tor}} = \{1\}
\end{equation*}
since $G_0$ is connected and thus $\chi\in A(G_0)$.  Therefore we have $A(G_0)=\widehat{G}_{\textrm{tor}}$ and hence
\begin{equation*}
\widehat{\left(G/G_0\right)}\cong \widehat{G}_{\textrm{tor}} .
\end{equation*}
The second isomorphism relation follows from Pontryagin duality: 
$$\widehat{G_0}\cong \widehat{G}/A(G_0)$$ 
and the proof is complete.
\end{proof}

\subsection{Minimal Systems}
By a {\it topological dynamical system} $(X,\varphi)$, we mean a topological space $X$ and a continuous map $\varphi:X\to X$, see \cite{KH}.  A topological dynamical system $(X,\varphi)$ is called {\it topologically transitive} if there exists a point $x\in X$ such that its orbit $\{\varphi^n(x)\}_{n\in\Z}$ is dense in $X$.  $(X,\varphi)$ is called {\it minimal} if every orbit is dense in $X$.  We say and write $\varphi$ is minimal for brevity.

Other equivalent characterization of minimal maps is as follows.  A set $A\subseteq X$ is called $\varphi$-invariant if $\varphi(A)\subseteq A$. Then, $\varphi$ is minimal if $X$ does not contain any non-empty, proper, closed $\varphi$-invariant subset. If in addition $X$ is assumed to be Hausdorff and compact, then a minimal map $\varphi$ must be surjective.   Moreover, if $(X,\varphi)$ is topologically transitive then there is no $\varphi$-invariant nonconstant continuous function on $X$.

Suppose that $G$ is a compact monothetic group with $x_1$ the generator of a dense cyclic subgroup. Then we define the map $\varphi:G\to G$ by the formula:
$$\varphi(x) = x+x_1.$$  
It follows that $(G,\varphi)$ is a minimal system.  Let us remark that for metrizable spaces a minimal, equicontinuous, dynamical systems coincide with translations by a generator of a dense cyclic subgroup of a compact monothetic groups, see Theorem 2.4.2 in \cite{K}. 

We now turn our attention to the algebras that are present in this paper.  Let $G$ be a compact infinite monothetic group, $C(G)$ the complex-valued continuous functions on $G$ and $\mu$ a normalized Haar measure on $G$.   Recall the notation for the elements of the cyclic subgroup generated by $x_1$:
\begin{equation*}
x_n = \varphi^n(0) = nx_1,
\end{equation*}
for $n\in\Z$.  The set $\{x_n\}_{n\in\Z}$ is the full orbit of $0$ under $\varphi$ and $\{x_n\}_{n\ge0}$ is the forward orbit.   As mentioned above, since $\varphi$ is a minimal homeomorphism, the forward orbit $\{x_n\}_{n\ge0}$ is dense in $G$.  

Consider the algebra of trigonometric polynomials on $G$:
\begin{equation*}
\mathcal{F} = \left\{\sum_nc_n\chi_n: \chi_n\in \widehat{G}, \textrm{ finite sums}\right\}.
\end{equation*} 
We state below two simple but useful properties of $\mathcal{F}$ that we will need later in the paper.  First we have the following observation.

\begin{prop}\label{property_F}
Let $f\in\mathcal{F}$.  Then $\int_Gf\,d\mu=0$ if and only if there is a trigonometric polynomial $g\in\mathcal{F}$ such that 
$$f = g\circ\varphi-g.$$
\end{prop}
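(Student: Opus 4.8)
The plan is to diagonalize the translation $\varphi$ in the character basis and read off both directions from the resulting scalar action. The starting point is that $\varphi$ acts on each character by a unimodular scalar: since $\varphi(x)=x+x_1$ and $\chi$ is a homomorphism,
$$(\chi\circ\varphi)(x) = \chi(x+x_1) = \chi(x_1)\,\chi(x),$$
so that $\chi\circ\varphi = \chi(x_1)\,\chi$. Consequently, for any finite sum $g = \sum_\chi b_\chi\,\chi \in \mathcal F$ one gets
$$g\circ\varphi - g = \sum_\chi b_\chi\bigl(\chi(x_1) - 1\bigr)\chi.$$
I will also use the standard orthogonality fact that $\int_G \chi\,d\mu = 0$ for every nontrivial $\chi\in\widehat{G}$, while $\int_G 1\,d\mu = 1$; thus for $f=\sum_\chi c_\chi\chi$ the integral $\int_G f\,d\mu$ is exactly the coefficient of the trivial character.

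The forward (``only if'') direction is then immediate from the displayed formula: in $g\circ\varphi - g$ the coefficient of any character $\chi$ equals $b_\chi(\chi(x_1)-1)$, and for the trivial character this vanishes because $\chi(x_1)=1$. Hence $f=g\circ\varphi-g$ has no trivial-character component and $\int_G f\,d\mu = 0$.

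For the converse (``if'') direction I would invert the diagonal operator explicitly. The crucial point is that $\chi(x_1)\ne 1$ for every nontrivial character $\chi$. This is exactly the injectivity of the map $\chi\mapsto\chi(x_1)$ identifying $\widehat{G}$ with a subgroup of $S^1$, recorded in Section 2.1, and it reflects the density of $\langle x_1\rangle$: if $\chi(x_1)=1$ then $\chi$ is trivial on the dense cyclic subgroup generated by $x_1$, hence, by continuity, trivial on all of $G$. Now given $f=\sum_\chi c_\chi\chi$ with $\int_G f\,d\mu=0$, the coefficient of the trivial character vanishes, so $f$ is a finite combination of nontrivial characters only. For each such $\chi$ the scalar $\chi(x_1)-1$ is nonzero, so I may set
$$b_\chi = \frac{c_\chi}{\chi(x_1)-1}$$
and define $g = \sum_\chi b_\chi\,\chi$, again a finite sum and therefore an element of $\mathcal F$. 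The displayed formula for $g\circ\varphi-g$ then returns precisely $f$.

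I do not expect a serious obstacle here: the only substantive input is the nonvanishing of $\chi(x_1)-1$ on nontrivial characters, which is guaranteed by the density of $\langle x_1\rangle$ and the injectivity of $\chi\mapsto\chi(x_1)$ already established in Section 2.1. Everything else is a formal computation with finite character expansions, and finiteness of the sums is what keeps $g$ inside $\mathcal F$ with no convergence issues.
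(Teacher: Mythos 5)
Your proposal is correct and follows essentially the same route as the paper: both rely on the orthogonality of characters, the key fact that $\chi(x_1)\neq 1$ for nontrivial $\chi$ (proved by the same density argument), and the explicit inversion $b_\chi = c_\chi/(\chi(x_1)-1)$, which is exactly the paper's choice $g = \chi/(\chi(x_1)-1)$ extended by linearity. The only cosmetic difference is that you also spell out the easy ``if'' direction, which the paper leaves implicit.
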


\begin{proof}
If $f\in\mathcal{F}$ then $f$ has the following decomposition:
\begin{equation*}
f = \sum_j c_j\chi_j,
\end{equation*}
where $\chi_j$ are characters on $G$. Notice that we have
\begin{equation*}
\int_G\chi_j\,d\mu = \left\{
\begin{aligned}
&1 &&\textrm{ if }\chi_j =1 \\
&0 &&\textrm{else,}
\end{aligned}\right.
\end{equation*}
which means that $\int_Gf\,d\mu=0$ if and only if $\chi_j\neq1$ for all $j$.  Let $\chi$ be a nontrivial character, then the goal is to find a $g\in\mathcal{F}$ such that 
\begin{equation}\label{cocyc}
\chi(x) = g(x+x_1)-g(x).
\end{equation}
 Notice that for a nontrivial character we must have $\chi(x_1)\neq 1$. Otherwise, if $\chi(x_1)=1$, then $\chi(x_n)=\chi(nx_1)=1$ which in turn implies that $\chi=1$ on a dense set, and thus $\chi\equiv1$, which is a contradiction.  Therefore, we can choose 
\begin{equation*}
g(x) = \frac{\chi(x)}{\chi(x_1)-1},
\end{equation*}
which clearly satisfies \eqref{cocyc}.
Now that we can find a function $g(x)$ that solves \eqref{cocyc} for a nontrivial character, we just take finite linear combinations of such functions for the general case of a trigonometric polynomial, thus completing the proof.
\end{proof}

Next we describe another useful property of the space $\mathcal{F}$.  

\begin{prop}\label{property_G}
For any nonzero $n\in\Z$, there exists a trigonometric polynomial $f\in\mathcal{F}$ such that 
$$f-f\circ\varphi^n\neq0.$$
\end{prop}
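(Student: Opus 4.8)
The plan is to reduce the statement to a single character and then to exploit that $\widehat{G}$ is infinite. First I would use that $\varphi^n(x) = x + nx_1 = x + x_n$, so that for any character $\chi\in\widehat{G}$ multiplicativity gives $\chi\circ\varphi^n(x) = \chi(x+x_n) = \chi(x)\chi(x_n)$, and hence $\chi - \chi\circ\varphi^n = (1-\chi(x_n))\,\chi$. Since a single character lies in $\mathcal{F}$, taking $f=\chi$ yields $f - f\circ\varphi^n \neq 0$ precisely when $\chi(x_n)\neq 1$, so it suffices to produce one character with $\chi(x_n)\neq 1$.

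Next I would translate this condition through the identification of $\widehat{G}$ with a discrete subgroup of $S^1$ via $\chi\mapsto\chi(x_1)$. Because $\chi(x_n) = \chi(nx_1) = \chi(x_1)^n$, finding a character with $\chi(x_n)\neq 1$ amounts to finding an element $z$ of the subgroup $\widehat{G}\subseteq S^1$ with $z^n\neq 1$.

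The key step, and the only place where the hypotheses genuinely enter, is the observation that $\widehat{G}$ is infinite: since $G$ is compact and infinite, Pontryagin duality makes $\widehat{G}$ an infinite discrete group, and under the identification above it is therefore an infinite subgroup of $S^1$. If, on the contrary, every $z\in\widehat{G}$ satisfied $z^n = 1$, then $\widehat{G}$ would be contained in the group of $n$-th roots of unity in $S^1$, which has exactly $|n|$ elements; this would force $\widehat{G}$ to be finite, a contradiction. Hence there exists $\chi\in\widehat{G}$ with $\chi(x_1)^n\neq 1$, and $f:=\chi$ is the required trigonometric polynomial.

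I expect the whole argument to be short. The computation reducing to characters is routine, and the only subtlety is to record correctly that infiniteness of $G$ passes to infiniteness of $\widehat{G}$ (equivalently, of its image in $S^1$); once that is in place the root-of-unity counting argument closes the proof immediately.
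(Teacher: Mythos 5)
Your proof is correct, but it reaches the key character by a different route than the paper does. Both arguments reduce the statement to producing a single character $\chi\in\widehat{G}$ with $\chi(x_n)\neq 1$ (your identity $\chi-\chi\circ\varphi^n=(1-\chi(x_n))\chi$ makes this reduction cleanly, since $\chi$ is nonvanishing). The paper obtains such a $\chi$ from the separation-of-points theorem for characters of a compact abelian group: since $x_1\neq x_{n+1}$, there is $\chi$ with $\chi(x_1)\neq\chi(x_{n+1})=\chi(x_1)\chi(x_n)$. You instead use the embedding $\widehat{G}\hookrightarrow S^1$, $\chi\mapsto\chi(x_1)$, together with infiniteness of $\widehat{G}$, and rule out the alternative by counting $n$-th roots of unity. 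Each route has its merit: the paper's is a one-line appeal to a standard duality theorem, but it silently uses that $x_{n+1}\neq x_1$, i.e.\ that $x_1$ has infinite order --- which itself requires density of the orbit plus infiniteness of $G$ (for finite $G$ the proposition is false, since $\varphi^{|G|}=\mathrm{id}$). Your argument makes the role of this hypothesis explicit: infiniteness of $G$ passes to $\widehat{G}$ by duality, and an infinite subgroup of $S^1$ cannot sit inside the $|n|$-element group of $n$-th roots of unity. The one step you use without comment is injectivity of $\chi\mapsto\chi(x_1)$, but that is exactly the identification the paper sets up (two characters agreeing at $x_1$ agree on the dense orbit, hence everywhere), so citing it is legitimate.
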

\begin{proof}
The key property of the characters is that they separate points of $G$, see Theorem 14 of \cite{M}.  Therefore, if $n\ne 0$, we can pick $\chi$ such that:
\begin{equation*}
\chi(x_1)\neq\chi(x_{n+1}) = \chi(\varphi^n(x_1)).
\end{equation*}
As in the previous proposition, the general case is handled by linearity and the proof is complete.
\end{proof}

\subsection{C$^*$-algebras}
Let $G$ be an infinite, compact, monothetic group. We will describe now two types of C$^*$-algebras that can be naturally associated with such groups. They are defined as concrete C$^*$-algebras of operators in the following Hilbert spaces.  The first Hilbert space is the $\ell^2$ space of the full orbit: 
$$H=\ell^2(\{x_l\}_{l\in\Z}),$$ 
which is naturally isomorphic with $\ell^2(\Z)$.  Let $\{E_l\}_{l\in\Z}$ be the canonical basis in $H$.  The second Hilbert space is the $\ell^2$ space of the forward orbit:
$$H_+ = \ell^2(\{x_k\}_{k\in\Z_{\geq 0}})$$ 
which is naturally isomorphic with the Hilbert space $\ell^2(\Z_{\geq 0})$.  We also let $\{E^+_k\}_{k\in\Z_{\ge0}}$ be the canonical basis on $H_+$. 

The C$^*$-algebras associated to $G$ are defined using the following operators.  Let $V:H\to H$ be the shift operator on $H$:
$$VE_l = E_{l+1}.$$
 We also need the unilateral shift operator on $H_+$: 
 $$UE^+_k = E^+_{k+1}.$$ 
 Notice that $V$ is a unitary while $U$ is an isometry.  We have:
 $$[U^*,U]=P_0,$$
where $P_0$ is the orthogonal projection onto the one-dimensional subspace spanned by $E^+_0$.
 
 For a continuous function $f\in C(G)$ we define two operators $M_f:H\to H$ and $M_f^+:H_+\to H_+$ via formulas:
 $$M_fE_l = f(x_l)E_l\ \textrm{ and }\ M_f^+E^+_k = f(x_k)E^+_k.$$ 
They are diagonal multiplication operators on $H$ and $H_+$ respectively.  Due to the density of the orbit $\{x_k\}_{k\in\Z_{\geq 0}}$, we immediately obtain:

\begin{equation*}
\|M_f\|=\|M^+_f\|= \underset{l\in\Z}{\textrm{sup }}|f(x_l)| = \underset{k\in\Z_{\ge0}}{\textrm{sup }}|f(x_k)| =\underset{x\in G}{\textrm{sup }}|f(x)| = \|f\|_\infty.
\end{equation*}
The algebras of operators generated by $M_f$'s or by $M^+_f$'s are thus isomorphic to $C(G)$ so they carry all the information about the space $G$, while operators $U$ and $V$ reflect the dynamics $\varphi$ on $G$. The relation between those operators is:
$$VM_fV^{-1}=M_{f\circ\varphi}.$$
Similarly we have:
$$UM^+_f=M^+_{f\circ\varphi}U.$$
There is also another, less obvious relation between $U$ and $M^+_f$'s, namely:
\begin{equation}\label{M_with_P_zero}
M^+_fP_0=P_0M^+_f=f(x_0)P_0.
\end{equation}
We define the algebra $B$ to be the C$^*$-algebra generated by operators $V$ and $M_f$:
$$B=C^*\{V,M_f: f\in C(G)\}.$$ 
We claim that $B$ is isomorphic with the crossed product algebra:
$$B\cong C(G)\rtimes_\varphi \Z.$$ 
Indeed, observe that $\Z$ is amenable, the action of $\Z$ on $G$ given by $\varphi$ is a free action, and $\varphi$ is a minimal homoemorphism, thus the crossed product is simple and equal to the reduced crossed product, see \cite{F}. Clearly, the operators $V$ and $M_f$ define a representation of $C(G)\rtimes_\varphi\Z$, which must be isomorphic to it, by simplicity of the crossed product.  

The algebra $B$ has a natural dense $*$-subalgebra $\mathcal{B}$ of polynomials in $V$, $V^{-1}$, and the $M_\chi$'s, where $\chi$ is a character of $G$. Equivalently, we have:
\begin{equation*}
\mathcal{B} = \left\{\sum_nV^nM_{f_n}: f_n\in\mathcal{F},\textrm{ finite sums}\right\}.
\end{equation*}

Next we define the other algebra that is of the main interest in this paper, a Toeplitz extension of $B$. 
We define the algebra $A$ to be the C$^*$-algebra generated by operators $U$ and $M^+_f$:
$$A=C^*\{U,M^+_f: f\in C(G)\}.$$ 

To proceed further we need the following label operators on $H$ and $H_+$ respectively:  
$$\mathbb{L} E_l = lE_l\ \textrm{ and }\ \K E^+_k = kE^+_k.$$
The algebra $A$ has a natural dense $*$-subalgebra $\mathcal{A}$ of polynomials in $U$, $U^*$, $M^+_\chi$'s, where $\chi$ is a character of $G$, which can be equivalently described as follows, using Proposition 3.1 from \cite{KMRSW1} and also Proposition \ref{n_cov_set_eq} below:
\begin{equation*}
\mathcal{A} = \left\{\sum_{n\ge0}U^n(a_n^+(\K) + M_{f_n^+}^+) + \sum_{n\ge1}(a_n^-(\K)+M_{f_n^-}^+)(U^*)^n: f_n^\pm\in\mathcal{F},a_n^\pm(k)\in c_{00}(\Z_{\geq 0})\right\},
\end{equation*}
where the sums above are finite sums and $c_{00}(\Z_{\ge0})$ is the space of sequences that are eventually zero.
Notice that if $a\in\mathcal{A}$ and $x\in c_{00}(\Z_{\ge0})\subseteq H_+$, then $ax\in c_{00}(\Z_{\ge0})$, an observation that is often used below.

Next we establish the key relation between the two algebras $A$ and $B$.
Let $P_+:\mathcal{H}\to\mathcal{H_+}$ be the following map from $\mathcal{H}$ onto $\mathcal{H}_+$ given by
$$P_+E_k=\left\{
\begin{aligned}
&E_k^+ &&\textrm{ if } k\ge0\\
&0 &&\textrm{ if } k<0.
\end{aligned}\right.$$
We also need another map $s:\mathcal{H_+}\to\mathcal{H}$ given by:
$$sE^+_k  =E_k.$$
Define the map $T:B(\mathcal{H})\to B(\mathcal{H}_+)$, between the spaces of bounded operators on $\mathcal{H}$ and $\mathcal{H}_+$, in the following way: given $b\in B(\mathcal{H})$
\begin{equation*}
T(b) = P_+bs.
\end{equation*}
$T$ is known as a Toeplitz map.  It has the following properties.

\begin{prop}\label{alg_props_T}
Let $T$ be the Toeplitz map defined above. Then:
\begin{enumerate} 
\item $T(I_\mathcal{H}) = I_{\mathcal{H}_+}$. 
\item $T(bV^n) = T(b)U^n$ and $T(V^{-n}b) = (U^*)^nT(b)$ for $n\ge0$ and all $b\in B(\mathcal{H})$.
\item $T(bM_f) = T(b)M_f^+$ and $T(M_f\,b) = M_f^+T(b)$ for all $f\in C(X)$ and all $b\in B(\mathcal{H})$.
\end{enumerate}
Consequently, it follows that $T$ maps $B$ to $A$ and $\mathcal{B}$ to $\mathcal{A}$.
\end{prop}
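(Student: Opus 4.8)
The plan is to verify everything at the level of the canonical orthonormal bases $\{E_l\}_{l\in\Z}$ and $\{E_k^+\}_{k\ge 0}$, on which all four operators $V$, $U$, $M_f$, $M_f^+$ act in a completely explicit (shift or diagonal) fashion. First I would record the single structural observation that $s=P_+^*$: since $\langle P_+E_l,E_k^+\rangle=\delta_{l,k}$ for $l\ge 0$ and $0$ otherwise, the adjoint of $P_+$ sends $E_k^+\mapsto E_k$, which is exactly $s$. Consequently $P_+s=P_+P_+^*=I_{\mathcal{H}_+}$, while $sP_+=P_+^*P_+$ is the orthogonal projection of $\mathcal H$ onto the closed span of $\{E_l:l\ge 0\}$. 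In particular $\|P_+\|=\|s\|=1$, so $T(b)=P_+bs$ is a contraction and hence continuous, a fact I will need at the end.

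Property (1) is then immediate: $T(I_{\mathcal H})=P_+s=I_{\mathcal H_+}$. For (2) and (3) the entire content reduces to four intertwining identities, each checked on a single basis vector:
$$V^ns = sU^n,\qquad P_+V^{-n}=(U^*)^nP_+,\qquad M_fs = sM_f^+,\qquad P_+M_f = M_f^+P_+ \qquad (n\ge 0).$$
For instance $V^nsE_k^+=V^nE_k=E_{k+n}=sE_{k+n}^+=sU^nE_k^+$, and the remaining three are equally short, the only care being that in the second identity $U^*E_0^+=0$ must match $P_+E_l=0$ for $l<0$ (for $0\le l<n$ both sides vanish). Granting these, (2) follows by inserting them into $T(bV^n)=P_+bV^ns=P_+bsU^n=T(b)U^n$ and $T(V^{-n}b)=P_+V^{-n}bs=(U^*)^nP_+bs=(U^*)^nT(b)$, and (3) follows from $T(bM_f)=P_+bM_fs=P_+bsM_f^+=T(b)M_f^+$ and $T(M_fb)=P_+M_fbs=M_f^+P_+bs=M_f^+T(b)$.

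For the final assertion I would argue on the dense subalgebra $\mathcal B$, whose typical generator is $V^nM_f$ with $f\in\mathcal F$. For $n\ge 0$ a direct computation on $E_k^+$ gives $T(V^nM_f)E_k^+=f(x_k)E_{k+n}^+$ (using $k+n\ge 0$), so $T(V^nM_f)=U^nM_f^+$, which is of the form displayed in the description of $\mathcal A$ with $a^+\equiv 0$. For $n=-m<0$ I would instead invoke part (2), namely $T(V^{-m}M_f)=(U^*)^mT(M_f)=(U^*)^mM_f^+$, and then push $M_f^+$ to the left through $(U^*)^m$ using the commutation relation between $U^*$ and multipliers (again a one-line basis check), obtaining $M_g^+(U^*)^m$ for some $g\in\mathcal F$, since $\mathcal F$ is invariant under composition with $\varphi^{\pm 1}$; this is the canonical $\mathcal A$-form $(0+M_g^+)(U^*)^m$. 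By linearity $T(\mathcal B)\subseteq\mathcal A$, and since $T$ is a contraction, $\mathcal B$ is dense in $B$, and $A$ is closed, this upgrades to $T(B)\subseteq A$.

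There is no deep obstacle here: the proposition is essentially a bookkeeping exercise about how the one-sided projection $P_+$ interacts with shifts and multiplications. The one genuine subtlety worth flagging is that $T$ is \emph{not} an algebra homomorphism --- $sP_+$ is a proper projection rather than the identity --- so the module-type identities in (2) and (3) are not automatic; they hold precisely because the operators appearing there ($V^n$ on the right with $n\ge 0$, $V^{-n}$ on the left with $n\ge 0$, and $M_f$ on either side) are exactly those that respect the splitting of $\mathcal H$ into its forward and backward parts. Getting the result into the canonical form of $\mathcal A$ --- in particular remembering to commute $M_f^+$ past powers of $U^*$ so the multiplier ends up on the correct side --- is the only place where a small, unavoidable calculation enters.
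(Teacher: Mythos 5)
Your proof is correct and follows essentially the same route as the paper's: both establish (1)--(3) by evaluating the relevant operators on the basis vectors $E_k^+$ (you merely package the computation as the four intertwining identities $V^ns=sU^n$, $P_+V^{-n}=(U^*)^nP_+$, $M_fs=sM_f^+$, $P_+M_f=M_f^+P_+$ before inserting them into $T(b)=P_+bs$). The one place you go beyond the paper is the final assertion: the paper leaves it implicit as a consequence of (1)--(3), whereas you verify it explicitly via $T(V^nM_f)=U^nM_f^+$ for $n\ge 0$ and $T(V^{-m}M_f)=M_g^+(U^*)^m\in\mathcal{A}$ with $g\in\mathcal F$, and then upgrade $T(\mathcal{B})\subseteq\mathcal{A}$ to $T(B)\subseteq A$ by the contraction-plus-density argument (using $s=P_+^*$, $\|T\|\le 1$) --- a worthwhile completion, since that continuity is exactly what the omitted step requires.
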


\begin{proof}
For the first statement, if $h\in\mathcal{H}_+$ then we have the following calculation:
\begin{equation*}
T(I_\mathcal{H})h = (P_+sI_\mathcal{H})h = P_+h = h = I_{\mathcal{H}_+}h .
\end{equation*}
For the second statement we apply $T(bV^n)$ to the basis elements $E_k^+$ of $\mathcal H_+$.  We have
\begin{equation*}
T(bV^n)E_k^+ = (P_+bV^n)s(E_k^+) = (P_+b)E_{k+n} = (P_+bs)U^nE_k^+ = T(b)U^nE_k^+ .
\end{equation*}
A similar calculation shows the other equality $T(V^{-n}b) = (U^*)^nT(b)$.  Finally, for the last statement, we apply $T(bM_f)$ and $T(M_fb)$ to the basis elements to get:
\begin{equation*}
\begin{aligned}
T(bM_f)E_k^+ &= (P_+bM_f)s(E_k^+) = (P_+b)M_fE_k = (P_+b)f(x_k)E_k = (P_+bs)M_f^+E_k^+ \\
&= T(b)M_f^+E_k^+ .
\end{aligned}
\end{equation*}
This completes the proof.
\end{proof}

The next result describes the main relation between the two algebras $A$ and $B$.

\begin{prop}
The ideal of compact operators $\mathcal{K}$ in $B(\mathcal{H}_+)$ is an ideal in $A$.  Moreover, $B$ is the factor algebra:
\begin{equation*}
B\cong A/\mathcal{K}
\end{equation*}
and
\begin{equation}\label{T_iso}
B\ni b\mapsto T(b)+\mathcal{K}\in A/\mathcal{K}
\end{equation}
is an isomorphism.
\end{prop}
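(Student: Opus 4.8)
The plan is to realize the map in \eqref{T_iso} as a $*$-isomorphism by assembling three facts: that $\mathcal{K}$ lies inside $A$, that the Toeplitz map $T$ becomes multiplicative after passing to the quotient by $\mathcal{K}$, and that simplicity of $B$ forces injectivity.

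First I would show $\mathcal{K}\subseteq A$. Since the constant function $1\in C(G)$ yields $M_1^+=I\in A$ and $U^*U=I$ because $U$ is an isometry, the defect projection
\[P_0=U^*U-UU^*=I-UU^*\]
belongs to $A$. Conjugating by shifts, the operators $U^nP_0(U^*)^m$ for $n,m\ge0$ are precisely the rank-one operators sending $E_m^+$ to $E_n^+$ and annihilating every other basis vector; their finite linear combinations are dense in $\mathcal{K}$, and as $A$ is norm closed we obtain $\mathcal{K}\subseteq A$. Because $\mathcal{K}$ is already a two-sided ideal in $B(\mathcal{H}_+)\supseteq A$, it is automatically an ideal in $A$, which settles the first assertion.

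Next I would check that $\pi:=q\circ T:B\to A/\mathcal{K}$, with $q:A\to A/\mathcal{K}$ the quotient map, is a $*$-homomorphism. Linearity and continuity are immediate, since $T$ is a linear contraction; and from $P_+^*=s$ one gets $T(b^*)=T(b)^*$, so $\pi$ is $*$-preserving. For multiplicativity modulo $\mathcal{K}$, the identity $sP_+=Q_+$, where $Q_+$ is the projection of $\mathcal{H}$ onto $\mathrm{span}\{E_k:k\ge0\}$, gives
\[T(b_1b_2)-T(b_1)T(b_2)=P_+b_1(I-Q_+)b_2s=P_+b_1Q_-b_2s,\]
with $Q_-=I-Q_+$ the projection onto the negative indices. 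For $b_2\in\mathcal{B}$, a finite sum $\sum_mV^mM_{g_m}$, one checks directly that $Q_-b_2s$ annihilates $E_k^+$ for all sufficiently large $k$ and is hence of finite rank, so the difference lies in $\mathcal{K}$. Since $\mathcal{B}$ is dense in $B$ and $\pi$ is continuous, multiplicativity extends to all of $B$.

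Finally, surjectivity holds because $\pi(B)$ is a C$^*$-subalgebra of $A/\mathcal{K}$ containing $\pi(V)=q(U)$ and $\pi(M_f)=q(M_f^+)$, and these generate $A/\mathcal{K}$, as $U$ and the $M_f^+$ generate $A$ and $q$ is surjective. Injectivity uses the simplicity of $B$ recorded earlier: $\ker\pi$ is a closed two-sided ideal of $B$, hence $0$ or all of $B$, and since $\pi(I)=I+\mathcal{K}\neq0$ (the identity on the infinite-dimensional $\mathcal{H}_+$ is not compact) we get $\ker\pi=0$. Therefore $\pi$ is a $*$-isomorphism $B\cong A/\mathcal{K}$, as claimed. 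I expect the main obstacle to be the multiplicativity-modulo-$\mathcal{K}$ step, namely deriving the commutator formula cleanly and verifying the finite-rank claim for $Q_-b_2s$ on the polynomial algebra $\mathcal{B}$; the remaining points reduce to bookkeeping or to the already-quoted simplicity of $B$.
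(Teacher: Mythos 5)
Your proof is correct, and it is worth noting how it relates to the paper's: the first half (showing $\mathcal{K}\subseteq A$ via $P_0=I-UU^*$ and the matrix units $U^nP_0(U^*)^m$, then taking norm limits) is exactly the paper's argument, but for the isomorphism the paper gives no details at all, stating only that verifying \eqref{T_iso} is an isomorphism is "analogous to the proof of Theorem 2.3 in \cite{KMRS}". Your proposal supplies the complete, self-contained argument that the paper outsources: the commutator identity $T(b_1b_2)-T(b_1)T(b_2)=P_+b_1Q_-b_2s$ together with the finite-rank verification of $Q_-b_2s$ for $b_2$ in the polynomial algebra $\mathcal{B}$ shows that the composition of $T$ with the quotient map is multiplicative modulo $\mathcal{K}$; surjectivity follows because the image is a C$^*$-subalgebra containing $q(U)=\pi(V)$ and $q(M_f^+)=\pi(M_f)$, which generate $A/\mathcal{K}$; and injectivity follows from the simplicity of $B\cong C(G)\rtimes_\varphi\Z$ (free minimal action of the amenable group $\Z$), recorded earlier in the paper, together with $\pi(I)=I+\mathcal{K}\neq 0$. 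Each of these steps checks out; in particular the density/continuity extension of multiplicativity is legitimate because your commutator formula holds for arbitrary $b_1\in B(\mathcal{H})$ with only $b_2$ restricted to $\mathcal{B}$, so a one-variable limiting argument suffices. The payoff of your route is a proof that can be read without consulting the reference; the cost is length, and the underlying mechanism (compression by the co-isometry $s$, errors supported near the "boundary" of the forward orbit) is presumably the same as in the cited theorem.
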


\begin{proof}
Notice first  that we have: 
$$P_0 =I - UU^*\in A.$$ 
It follows that the operators $P_{k,l} := U^kP_0(U^*)^l$ are also in $A$.  Thus, all finite rank operators with respect to the basis $\{E^+_k\}$ belong to $A$ as they are finite linear combinations of $P_{k,l}$.  Moreover, since all compact operators in $B(\mathcal{H}_+)$ are norm limits of these finite rank operators and $A$ is a C$^*$-algebra, it follows that $\mathcal{K}\subseteq A$.  It is clear that $\mathcal{K}$ is an ideal in $A$.  Verifying that the map given by equation (\ref{T_iso}) is an isomorphism, is analogous to the proof of Theorem 2.3 in \cite{KMRS}.
\end{proof}

It follows from the two previous propositions that we have the following identification.

\begin{cor}\label{structure_cor}
Under the isomorphism given by the equation (\ref{T_iso}), $\mathcal{B}$ is the factor algebra:
$$\mathcal{B}\cong [\mathcal{A}].$$
\end{cor}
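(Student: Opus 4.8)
The plan is to make explicit what the statement means under the isomorphism of \eqref{T_iso}. Writing $q\colon A\to A/\mathcal K$ for the quotient map, that isomorphism is $\Phi(b)=q(T(b))$, so $\Phi$ is already a C$^*$-isomorphism of $B$ onto $A/\mathcal K$ and its restriction to $\mathcal B$ is automatically an injective $*$-homomorphism onto the image $\Phi(\mathcal B)=q(T(\mathcal B))$. Thus the whole content of the corollary is the set equality $q(T(\mathcal B))=q(\mathcal A)=[\mathcal A]$, which I would prove by two inclusions.

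The inclusion $q(T(\mathcal B))\subseteq[\mathcal A]$ is immediate from Proposition \ref{alg_props_T}, which already gives $T(\mathcal B)\subseteq\mathcal A$ and hence $q(T(\mathcal B))\subseteq q(\mathcal A)$. For the reverse inclusion I would take a general element of $\mathcal A$ in its normal form
\[
a=\sum_{n\ge0}U^n\bigl(a_n^+(\K)+M_{f_n^+}^+\bigr)+\sum_{n\ge1}\bigl(a_n^-(\K)+M_{f_n^-}^+\bigr)(U^*)^n,
\]
and exhibit $b\in\mathcal B$ with $a-T(b)\in\mathcal K$. The natural candidate is obtained by discarding the diagonal pieces and matching the trigonometric coefficients: using $T(I)=I$ together with parts (2) and (3) of Proposition \ref{alg_props_T} one computes $T(V^nM_f)=U^nM_f^+$ and $T(V^{-n}M_f)=(U^*)^nM_f^+$, and the covariance relations (e.g. $VM_fV^{-1}=M_{f\circ\varphi}$ together with its forward-orbit counterpart) let one rewrite $(U^*)^nM_f^+=M_{g}^+(U^*)^n$ \emph{exactly}, for a suitable $g\in\mathcal F$; the correction terms that a priori involve $P_0=I-UU^*$ drop out because $U^*P_0=0$. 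Since $\mathcal F$ is invariant under $\varphi^{\pm1}$, as $b$ ranges over $\mathcal B$ the operator $T(b)$ ranges over all sums $\sum_{n\ge0}U^nM_{g_n}^++\sum_{n\ge1}M_{h_n}^+(U^*)^n$ with $g_n,h_n\in\mathcal F$. Choosing $b$ so that these match the multiplication parts of $a$, the difference $a-T(b)$ equals $\sum_{n\ge0}U^na_n^+(\K)+\sum_{n\ge1}a_n^-(\K)(U^*)^n$.

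It then remains to observe that this difference lies in $\mathcal K$: each $a_n^\pm(\K)$ is the diagonal operator $\diag\bigl(a_n^\pm(k)\bigr)$ with $a_n^\pm\in c_{00}(\Z_{\ge0})$, hence finite rank, and multiplying by the bounded operators $U^n$ or $(U^*)^n$ keeps it finite rank. Therefore $q(a)=q(T(b))$, giving $[\mathcal A]\subseteq q(T(\mathcal B))$, and combined with the first inclusion this yields $\Phi(\mathcal B)=[\mathcal A]$ and the desired isomorphism $\mathcal B\cong[\mathcal A]$. No step here is deep; the only point needing care is the bookkeeping in the normal form of $\mathcal A$ — one must verify that, after subtracting the Toeplitz image $T(b)$, every surviving term comes from a $c_{00}$ coefficient and is therefore finite rank, so that nothing non-compact is left behind.
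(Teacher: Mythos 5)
Your proof is correct: the two inclusions, the exact commutation $(U^*)^nM_f^+=M^+_{f\circ\varphi^n}(U^*)^n$ (no $P_0$ correction needed), and the observation that the leftover diagonal terms $U^na_n^+(\K)$, $a_n^-(\K)(U^*)^n$ have $c_{00}$ coefficients and are therefore finite rank, hence vanish in $A/\mathcal{K}$, all check out. For comparison: the paper gives essentially no argument, asserting the corollary as an immediate consequence of Proposition \ref{alg_props_T} and the isomorphism \eqref{T_iso}, so your write-up is a legitimate filling-in of omitted detail rather than a different result; still, the implicit one-line route for your harder inclusion is worth knowing. Since $a\mapsto[a]$ (the quotient map composed with the inverse of \eqref{T_iso}) is a $*$-homomorphism and $\mathcal{A}$ is, by definition, the algebra of polynomials in $U$, $U^*$ and the $M^+_\chi$'s, its image $[\mathcal{A}]$ is the algebra of polynomials in $[U]=V$, $[U^*]=V^{-1}$ and $[M^+_\chi]=M_\chi$, which is precisely $\mathcal{B}$; these generator formulas are exactly the ones the paper records right after the corollary, and together with $[T(b)]=b$ they yield both inclusions at once. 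The generator argument buys brevity and sidesteps the normal-form bookkeeping you yourself flag as the delicate point; your computation buys something slightly stronger than the corollary requires, namely an explicit $b\in\mathcal{B}$ with $a-T(b)$ of finite rank for each $a\in\mathcal{A}$, which makes the identification concrete at the level of representatives rather than cosets.
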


For future reference we notice the following formulas: 
$$[U]=V, \ [M^+_f]=M_f, \ [\K]=\mathbb{L},$$
and also, for every $b\in B$:
$$[T(b)]=b.$$

Useful tools in classifying derivations on $A$ and $B$ are $1$-parameter groups of automorphisms of $A$ and $B$ respectively that are given by the following equations:
\begin{equation*}
\begin{aligned}
\rho_\theta^{\K}(a) &= e^{i\theta\K}ae^{-i\theta\K}&\textrm{ for }a\in A\\
\rho_\theta^{\mathbb{L}}(b) &= e^{i\theta\mathbb{L}}be^{-i\theta\mathbb{L}}&\textrm{ for }b\in B,
\end{aligned}
\end{equation*}
where $\theta\in\R/2\pi\Z$. We have the following formulas:
$$\rho_\theta^{\K}(U)=e^{i\theta}U,\ \rho_\theta^{\K}(a(\K))=a(\K),$$
and similarly for $\rho_\theta^{\mathbb{L}}$.
It immediately follows that $\rho_\theta^{\K}:\mathcal{A}\to\mathcal{A}$ and that $\rho_\theta^{\mathbb{L}}:\mathcal{B}\to\mathcal{B}$.  

The automorphisms define natural $\Z$-gradings on $A$ and $B$ given by the spectral subspaces:
\begin{equation*}
\begin{aligned}
A_n &=\{a\in A : \rho_\theta^{\K}(a) = e^{in\theta}a\}\\
B_n &=\{b\in B : \rho_\theta^{\mathbb{L}}(a) = e^{in\theta}b\}.
\end{aligned}
\end{equation*}
We call the elements of these sets the $n$-covariant elements of $A$ and $B$ respectively.  When $n=0$ we call those elements invariant.  

Let $c_0(\Z_{\ge0})$ be the space of sequences that converge to zero.  The $n$-covariant elements of $A$ and $B$ can described in detail.
\begin{prop}\label{n_cov_set_eq}
We have the following set equalities:
\begin{equation*}
A_n = \{a\in A : a=U^n(a_n(\K)+M_f^+), a(k)\in c_0(\Z_{\geq 0}), f\in C(G)\}
\end{equation*}
for $n\ge0$ and 
\begin{equation*}
A_n = \{a\in A : a=(a(\K)+M_f^+)(U^*)^{-n}, a(k)\in c_0(\Z_{\geq 0}), f\in C(G)\}
\end{equation*}
when $n<0$.
Similarly, we have:
\begin{equation*}
B_n = \{b\in B: b=V^nM_f, f\in C(G)\},
\end{equation*}
if $n\ge0$, and
\begin{equation*}
B_n = \{b\in B: b=M_fV^n, f\in C(G)\}
\end{equation*}
for $n<0$.
\end{prop}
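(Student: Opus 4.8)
The plan is to realize each spectral subspace as the range of the averaging projection for the circle action $\rho_\theta$, and then to transfer the norm-closure problem to the level of weight sequences. For $\theta\in\R/2\pi\Z$ set $\Phi_n^{\K}(a)=\frac{1}{2\pi}\int_0^{2\pi}e^{-in\theta}\rho_\theta^{\K}(a)\,d\theta$, and define $\Phi_n^{\mathbb{L}}$ analogously on $B$. Each $\Phi_n$ is contractive, maps $A$ (resp. $B$) onto the closed subspace $A_n$ (resp. $B_n$) and restricts to the identity there. Since the generators are $\rho_\theta$-homogeneous ($\rho_\theta^{\K}(U)=e^{i\theta}U$, $\rho_\theta^{\K}(U^*)=e^{-i\theta}U^*$, while $M_\chi^+$ and $a(\K)$ are invariant), $\Phi_n^{\K}$ annihilates every monomial in $U,U^*,M_\chi^+$ except those of net degree $n$. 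By density of the polynomial algebra and continuity of $\Phi_n$, this gives $A_n=\overline{\Phi_n^{\K}(\mathcal A)}$ and $B_n=\overline{\Phi_n^{\mathbb{L}}(\mathcal B)}$.

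First I would settle the case of $B$, which feeds into the argument for $A$. A direct computation gives $\Phi_n^{\mathbb{L}}(\mathcal B)=\{V^nM_f:f\in\mathcal F\}$ for $n\ge 0$ (and the mirror form $\{M_fV^n\}$ for $n<0$, via $V^nM_f=M_{f\circ\varphi^n}V^n$). Because $V$ is unitary, $\|V^nM_f\|=\|M_f\|=\|f\|_\infty$ by density of the orbit, so $f\mapsto V^nM_f$ is isometric from $(C(G),\|\cdot\|_\infty)$ onto its image. As $\mathcal F$ is a unital $*$-subalgebra that separates points, Stone--Weierstrass yields $\overline{\mathcal F}=C(G)$, and the isometry converts the operator-norm closure into the uniform closure of the symbols, giving $B_n=\{V^nM_f:f\in C(G)\}$.

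For $A$, the easy inclusion is the computation $\rho_\theta^{\K}(U^n(a(\K)+M_f^+))=e^{in\theta}U^n(a(\K)+M_f^+)$ together with $a(\K)\in\mathcal{K}\subseteq A$ for $a\in c_0$, so the right-hand set lies in $A_n$. For the reverse inclusion I identify $U^n(a(\K)+M_f^+)$ with the weighted shift $E_k^+\mapsto c_kE_{k+n}^+$ of weight $c_k=a(k)+f(x_k)$, whose norm is $\sup_k|c_k|$ since $U^n$ is an isometry. Given $\alpha\in A_n$, write $\alpha=\lim_j U^n(a^{(j)}(\K)+M_{f^{(j)}}^+)$ with $a^{(j)}\in c_{00}$, $f^{(j)}\in\mathcal F$, so the weights converge uniformly to the weight $c$ of $\alpha$. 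The crucial step is to separate the two pieces of $c$: applying the quotient map $[\,\cdot\,]\colon A\to A/\mathcal{K}\cong B$, which intertwines the gradings since $[\K]=\mathbb{L}$, sends each approximant to $V^nM_{f^{(j)}}$ because $a^{(j)}(\K)$ is compact, whence $[\alpha]=\lim_j V^nM_{f^{(j)}}\in B_n$. By the description of $B_n$ just proved and the isometry above, $f^{(j)}\to f$ uniformly for the unique $f\in C(G)$ with $[\alpha]=V^nM_f$. Therefore $a^{(j)}(k)=c_k^{(j)}-f^{(j)}(x_k)$ converges uniformly to $a(k):=c_k-f(x_k)$; since $c_0$ is closed in $\ell^\infty$ and each $a^{(j)}\in c_{00}$, the limit satisfies $a\in c_0$, so $\alpha=U^n(a(\K)+M_f^+)$ in the required form. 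The case $n<0$ is identical with $U^*$ replacing $U$.

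The main obstacle is exactly this separation of the compact ($c_0$) part from the multiplication ($C(G)$) part in the limit: a priori the sequences $a^{(j)}$ and $f^{(j)}$ need not converge individually even though their sum does. Routing the continuous symbol through the quotient $A/\mathcal{K}\cong B$ pins down $f$ first, and uniquely, because density of the orbit forces $c_0\cap\{(f(x_k)):f\in C(G)\}=\{0\}$; once $f$ is fixed, the compact part is determined and automatically lands in $c_0$.
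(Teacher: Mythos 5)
Your proof is correct, but it follows a genuinely different route from the paper's. The paper argues by reduction to the invariant case: it first asserts that the invariant elements of $A$ are exactly the diagonal operators in $A$ and that these admit the unique decomposition $a(\K)+M_f^+$ with $a(k)\in c_0(\Z_{\geq 0})$, $f\in C(G)$ --- a fact it does not reprove but quotes as analogous to Proposition 2.4 of \cite{KMRSW2} --- and then handles $n\neq 0$ by noting that $a\in A_n$ implies $a(U^*)^n$ is invariant (since $(U^*)^nU^n=I$), with the same reduction asserted for $B$. You instead realize $A_n$ and $B_n$ as norm closures of the homogeneous polynomial parts via the averaging projections $\Phi_n$, compute $B_n$ from the isometry $f\mapsto V^nM_f$ together with Stone--Weierstrass, and then attack the genuine analytic difficulty in $A_n$ --- that uniform convergence of the weight sums $a^{(j)}(k)+f^{(j)}(x_k)$ does not a priori force convergence of either summand --- by pushing the approximants through the quotient $A/\mathcal{K}\cong B$ to pin down $f$ first, recovering the remaining part from the closedness of $c_0$ in $\ell^\infty$. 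What the paper's approach buys is brevity: a short reduction plus an external citation. What yours buys is self-containedness and structure: you actually prove the decomposition the paper cites (your argument with $n\ge 0$ covers the invariant case $n=0$ as well), and you exploit the extension $0\to\mathcal{K}\to A\to B\to 0$ by proving the $B$ statement first and feeding it into the $A$ statement, a dependence absent from the paper's parallel treatment. One small step you should make explicit: when you say the approximants' weights ``converge uniformly to the weight $c$ of $\alpha$,'' you are implicitly asserting that $\alpha$ itself has the form $U^n\cdot(\textrm{diagonal})$; this is immediate because $(U^*)^n\Phi_n(p_j)\to(U^*)^n\alpha$ exhibits $(U^*)^n\alpha$ as a norm limit of diagonal operators, hence diagonal, but it deserves a sentence.
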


\begin{proof}
Consider the invariant elements in $A$, that is $\rho_\theta^\K(a)=a$.  It follows from the definition of $\rho_\theta^\K$ that these elements are precisely the diagonal operators in $A$.  Moreover, we have the following unique decomposition, which is analogous to Proposition 2.4 in \cite{KMRSW2}:
\begin{equation*}
a=a(\K) + M_f^+
\end{equation*}
where $a(k)\in c_0(\Z_{\ge0})$ and $f\in C(G)$.
Next we consider the $n$-covariant elements for $n\neq0$.  Without loss of generality we only consider $n>0$.  Since we have:
\begin{equation*}
\begin{aligned}
\rho_\theta^{\K}(U^n) &= e^{in\theta}U^n\\
\rho_\theta^{\K}(a(\K) + M_f^+) &= a(\K) + M_f^+
\end{aligned}
\end{equation*}
for $a(k)\in c_0(\Z_{\geq 0})$ and $f\in C(G)$, one containment follows immediately.  On the other hand, if $a\in A_n$ then $a(U^*)^n$ is an invariant element and thus by the above has the form
\begin{equation*}
a(U^*)^n = a(\K) + M_f^+
\end{equation*} 
for some $a(k)\in c_0(\Z_{\geq 0})$ and $f\in C(G)$.  The other direction now follows.  The same argument also works for $B_n$, completing the proof.
\end{proof}

Similarly, we consider $n$-covariant elements from $\mathcal{A}$ and $\mathcal{B}$:
\begin{equation*}
\mathcal{A}_n=\{a\in\mathcal{A}: \rho_\theta^{\K}(a) = e^{in\theta}a\}\textrm{ and }\mathcal{B}_n=\{b\in\mathcal{B}: \rho_\theta^{\mathbb{L}}(b) = e^{in\theta}b\}.
\end{equation*}
As in Proposition \ref{n_cov_set_eq}, $a\in\mathcal{A}_n$ if and only if $a$ has the same element decomposition but with $a(k)\in c_{00}(\Z_{\ge0})$ and $f\in\mathcal{F}$.  Again, there is an analogous result for $b\in\mathcal{B}_n$.

\section{Classification of Derivations}
As in \cite{KMRSW2}, one of the main goals in this paper is to classify unbounded derivations in $A$ and $B$.  We begin with recalling the basic concepts.

Let $M$ be a Banach algebra and let $\mathcal{M}$ be a dense subalgebra of $M$.
A linear map $d:\mathcal{M}\to M$ is called a {\it derivation} if the Leibniz rule holds:
\begin{equation*}
d(ab) = ad(b) + d(a)b
\end{equation*}
for all $a,b\in\mathcal{M}$. We say a derivation $d:\mathcal{M}\to M$ is {\it inner} if there is an element $x\in M$ such that
\begin{equation*}
d(a) = [x,a]
\end{equation*}
for $a\in\mathcal{M}$.  We say a derivation $d:\mathcal{M}\to M$ is {\it approximately inner} if there are $x_n\in M$  such that 
\begin{equation*}
d(a) = \lim_{n\to\infty}[x_n,a]
\end{equation*} 
for $a\in\mathcal{M}$.  

Given $n \in \mathbb Z$, a derivation $d:\mathcal{A}\to A$ is said to be a {\it $n$-covariant derivation} if the relation 
 $$(\rho^{\mathbb K}_{\theta})^{-1}d(\rho^{\mathbb K}_{\theta}(a))= e^{-in\theta} d(a)$$ holds.  We have a similar definition for a derivation $\delta:\mathcal{B}\to B$.  Like above, when $n=0$ we say the derivation is invariant.

\subsection{Derivations in $A$}

We first classify all invariant derivations $d:\mathcal{A}\to A$.  An example of an invariant derivation is given by
\begin{equation*}
d_{\K}(a) = [\K,a]
\end{equation*}
where $a\in\mathcal{A}$.  This derivation is well defined because $\mathcal{A}$ is the space of polynomials in $U$, $U^*$, and $M_f^+$ and we have $[\K,U]=U$, $[\K,U^*]= -U^*$ and $[\K,M_f^+]=0$.

\begin{lem}\label{der_alpha}
For any $\{\alpha(k)\}\in c_0(\Z_{\ge0})$ there exists a unique derivation $d_{\alpha}:\mathcal{A}\to A$ such that
\begin{equation*}
d_{\alpha}(U) = U\alpha(\K),\quad d_{\alpha}(U^*)=-\alpha(\K)U^*,\quad d_{\alpha}(a(\K)) = 0
\end{equation*}
for every $a(\K)\in\mathcal{A}_0$.  Moreover this derivation is an approximately inner invariant derivation.
\end{lem}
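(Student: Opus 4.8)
The plan is to settle uniqueness first and then obtain existence and approximate innerness \emph{simultaneously}, by realizing $d_\alpha$ as a pointwise norm-limit of inner derivations. Uniqueness is the easy half: the subalgebra $\mathcal{A}$ is generated, as a $*$-algebra, by $U$, $U^*$ and the multiplication operators $M_\chi^+$; the prescribed data specify $d_\alpha(U)$, $d_\alpha(U^*)$, and, because every $M_\chi^+$ lies in $\mathcal{A}_0$, also $d_\alpha(M_\chi^+)=0$. Hence the Leibniz rule forces $d_\alpha$ on every word in the generators, so at most one such derivation can exist.

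For existence I would first look for an inner implementer of the form $x=\beta(\K)$. Using $\K E_k^+=kE_k^+$ one computes $[\beta(\K),U]=U\gamma(\K)$ with $\gamma(k)=\beta(k+1)-\beta(k)$, so matching $d_\alpha(U)=U\alpha(\K)$ amounts to solving the difference equation $\beta(k+1)-\beta(k)=\alpha(k)$, i.e. $\beta(k)=\sum_{j=0}^{k-1}\alpha(j)$. \textbf{This is exactly where the main obstacle sits:} since $\alpha\in c_0(\Z_{\ge0})$ need not be summable, the primitive $\beta$ may be unbounded, so $\beta(\K)$ need not define an element of $A$, and $d_\alpha$ is in general genuinely non-inner. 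The remedy is truncation.

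Accordingly I would set $\beta_n(k)=\sum_{j=0}^{\min(k,n)-1}\alpha(j)$, which is eventually constant, so $x_n:=\beta_n(\K)$ is a bounded diagonal operator belonging to $A$, and let $d_n:=[x_n,\,\cdot\,]$ be the associated bounded inner derivation. A direct computation gives $[x_n,U]=U\gamma_n(\K)$ with $\gamma_n(k)=\alpha(k)$ for $k<n$ and $\gamma_n(k)=0$ for $k\ge n$, so that $\|\gamma_n-\alpha\|_\infty=\sup_{k\ge n}|\alpha(k)|\to0$ because $\alpha\in c_0(\Z_{\ge0})$; thus $[x_n,U]\to U\alpha(\K)$ in norm, and symmetrically $[x_n,U^*]\to-\alpha(\K)U^*$. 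Since $x_n$ is a function of $\K$ it commutes with every element of $\mathcal{A}_0$, so $[x_n,M_\chi^+]=0$ for all $n$. Hence $d_n$ converges in norm on each generator to the prescribed values.

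Finally I would propagate this convergence from the generators to all of $\mathcal{A}$: writing a general element as a sum of words and using $[x_n,ab]=a[x_n,b]+[x_n,a]b$ together with norm-continuity of multiplication, an induction on word length shows $d_\alpha(a):=\lim_{n\to\infty}[x_n,a]$ exists in norm for every $a\in\mathcal{A}$. The limit automatically obeys the Leibniz rule (pass to the limit in $d_n(ab)=a\,d_n(b)+d_n(a)\,b$), agrees with the prescribed values on generators, and is approximately inner by its very construction. Invariance is immediate because each $x_n=\beta_n(\K)$ commutes with $e^{i\theta\K}$, so every $d_n$, and hence its limit $d_\alpha$, is invariant; equivalently, $d_\alpha$ sends $U$ into $A_1$, $U^*$ into $A_{-1}$ and $\mathcal{A}_0$ into $A_0$. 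The only points requiring real care are the non-summability issue already flagged and the verification that norm-convergence on generators upgrades to all of $\mathcal{A}$.
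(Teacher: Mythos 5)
Your proposal is correct and follows essentially the same route as the paper: truncate $\alpha$, take the primitive $\beta_n(k)=\sum_{j=0}^{\min(k,n)-1}\alpha(j)$ (identical to the paper's $\beta^N$), form the inner derivations $[\beta_n(\K),\cdot\,]$, verify norm convergence on the generators $U$, $U^*$ and $\mathcal{A}_0$, and propagate to all of $\mathcal{A}$ via the Leibniz rule, which yields approximate innerness and invariance simultaneously. Your explicit uniqueness argument and the induction on word length merely spell out steps the paper leaves implicit.
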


\begin{proof}
Define a sequence $\{\alpha^N(k)\}$ as follows:
\begin{equation*}
\alpha_0^N(k) = \left\{
\begin{aligned}
&\alpha_0(k) &&\textrm{ for }k<N \\
&0 &&\textrm{ for }k\ge N.
\end{aligned}\right.
\end{equation*}
Then $\alpha^N(k)\in c_{00}(\Z_{\geq 0})$ and $\alpha^N(\K)$ converges to $\alpha(\K)$ as $N\to\infty$.  Next, define a sequence $\{\beta^N(k)\}$ by
\begin{equation*}
\beta^N(k) = \sum_{j=0}^{k-1}\alpha^N(j),
\end{equation*}
so that $\beta^N(k)$ is eventually constant.  Thus, $d_{\alpha^N}:\mathcal{A}\to A$ defined by
$$d_{\alpha^N}(a)=[\beta^N(\K),a]$$ 
is an invariant inner derivation.  We have
\begin{equation*}
\lim_{N\to\infty}d_{\alpha^N}(U) = U\alpha(\K)\,,\quad\lim_{N\to\infty}d_{\alpha^N}(U^*) = -\alpha(\K)U^*\textrm{ and }d_{\alpha^N}(a(\K))=0
\end{equation*}
for all $a(\K)\in\mathcal{A}_0$.  Thus, by the Leibniz rule, the limit 
\begin{equation*}
\lim_{N\to\infty} d_{\alpha^N}(a)=d_\alpha(a)
\end{equation*}
exists for all $a\in\mathcal{A}$.  Thus, this limit is a derivation from $\mathcal{A}$ to $A$.  It follows that $d_\alpha$ is approximately inner and invariant.
\end{proof}

\begin{lem}\label{der_f_zero}
For any $f\in C(G)$ such that
\begin{equation*}
\int_G f\,d\mu =0,
\end{equation*}
there exists a unique derivation $d_f:\mathcal{A}\to A$ such that 
\begin{equation*}
d_f(U) = UM_f^+,\quad d_f(U^*) = -M_f^+U^*,\quad d_f(a(\K)) = 0
\end{equation*}
where $a(\K)\in \mathcal{A}_0$.  Moreover $d_f$ is an approximately inner invariant derivation.
\end{lem}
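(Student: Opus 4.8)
The plan is to realize $d_f$ as a limit of inner derivations, exactly as in the proof of Lemma \ref{der_alpha}, but with the implementing elements now being the multiplication operators $M_g^+$ rather than functions of $\K$. The guiding computation is that for any $g\in C(G)$ one has, on basis vectors,
\begin{equation*}
[M_g^+, U]E_k^+ = \bigl(g(x_{k+1}) - g(x_k)\bigr)E_{k+1}^+ = \bigl((g\circ\varphi)(x_k) - g(x_k)\bigr)UE_k^+,
\end{equation*}
so that $[M_g^+, U] = U M_{g\circ\varphi - g}^+$, while $[M_g^+, a(\K)] = 0$ and $[M_g^+, M_h^+]=0$ for any diagonal operators. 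Thus if we could solve the cohomological equation $g\circ\varphi - g = f$ with $g\in C(G)$, the inner derivation $[M_g^+,\cdot]$ would already equal $d_f$; in particular, when $f\in\mathcal{F}$ has mean zero, Proposition \ref{property_F} applies directly and $d_f$ is genuinely inner. Uniqueness is the easy half: since $\mathcal{A}$ is generated as an algebra by $U$, $U^*$ and the invariant subalgebra $\mathcal{A}_0$, a derivation is determined by its values on these generators, so the stated conditions together with $d_f\equiv 0$ on $\mathcal{A}_0$ pin down $d_f$ via the Leibniz rule.

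For existence, the difficulty is that $f$ is merely continuous, so Proposition \ref{property_F}, which solves the cohomological equation only for mean-zero trigonometric polynomials, does not apply directly. I would therefore approximate. Since the characters separate points, $\mathcal{F}$ is a unital $*$-subalgebra dense in $C(G)$ by Stone--Weierstrass, so I can pick $\widetilde f_N\in\mathcal{F}$ with $\|\widetilde f_N - f\|_\infty\to 0$ and set $f_N := \widetilde f_N - \int_G \widetilde f_N\,d\mu\in\mathcal{F}$. Because $\int_G f\,d\mu = 0$, the means $\int_G \widetilde f_N\,d\mu\to 0$, so $f_N\to f$ uniformly and $\int_G f_N\,d\mu = 0$. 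Proposition \ref{property_F} then yields $g_N\in\mathcal{F}$ with $f_N = g_N\circ\varphi - g_N$, and I define the inner derivations $\delta_N(a):=[M_{g_N}^+, a]$ on $\mathcal{A}$.

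The computation above gives $\delta_N(U) = U M_{f_N}^+$ and, by a parallel calculation, $\delta_N(U^*) = -M_{f_N}^+ U^*$, while $\delta_N$ annihilates every diagonal element, hence all of $\mathcal{A}_0$. Since $\|M_{f_N}^+ - M_f^+\| = \|f_N - f\|_\infty\to 0$, we obtain $\delta_N(U)\to U M_f^+$ and $\delta_N(U^*)\to -M_f^+ U^*$ in operator norm, and $\delta_N(a)=0$ for $a\in\mathcal{A}_0$. Any $a\in\mathcal{A}$ is a finite polynomial in $U$, $U^*$ with coefficients in $\mathcal{A}_0$; expanding $\delta_N(a)$ by the Leibniz rule writes it as a finite sum of products of fixed generators with a single $\delta_N$ applied to one factor, and each such term converges. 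Hence $d_f(a):=\lim_N \delta_N(a)$ exists for all $a\in\mathcal{A}$, is a derivation (the Leibniz rule survives the limit), takes the prescribed values on $U$, $U^*$ and vanishes on $\mathcal{A}_0$, and is approximately inner by construction; invariance is immediate because each $\delta_N$ is implemented by the invariant element $M_{g_N}^+\in A_0$, and this property passes to the limit.

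The step I expect to be the genuine obstacle is not any single estimate but the conceptual point underlying approximate innerness: the implementing elements $M_{g_N}^+$ need not converge, since the cohomological equation $g\circ\varphi - g = f$ generally has no continuous solution (this is the small-divisor phenomenon already for irrational rotations of $\T^1$). What saves the argument is that the derivation values $\delta_N(U) = U M_{f_N}^+$ depend only on $f_N$, which does converge, and not on $g_N$ itself; this is precisely why $d_f$ is approximately inner but in general not inner. The only care needed is to keep the approximants mean-zero so that Proposition \ref{property_F} can be invoked at each stage, and to rely on the isometry $\|M_h^+\| = \|h\|_\infty$ for the clean norm convergence.
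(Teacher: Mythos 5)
Your proposal is correct and follows essentially the same route as the paper's proof: approximate $f$ in the uniform norm by mean-zero trigonometric polynomials $f_N\in\mathcal{F}$, solve the cohomological equation $f_N = g_N\circ\varphi - g_N$ via Proposition \ref{property_F}, form the inner invariant derivations $[M_{g_N}^+,\cdot]$, and pass to the limit on generators using the Leibniz rule. Your additional remarks --- normalizing the approximants to have zero mean, and the observation that the limit depends only on $f_N$ and not on the (possibly divergent) $g_N$ --- are accurate refinements of details the paper leaves implicit.
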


\begin{proof}
By the density of $\mathcal{F}$ we can pick a sequence $\{f^N\}\subseteq\mathcal{F}$ such that $f^N$ converges to $f$ and
\begin{equation*}
\int_Gf^N\,d\mu=0.
\end{equation*}
By Proposition \ref{property_F}, there exists a sequence $\{g^N\}\subseteq\mathcal{F}$ such that 
$$f^N(x)=g^N(\varphi(x))-g^N(x).$$ 
We define
\begin{equation*}
d_{f^N}(a) = [M_{g^N}^+,a],
\end{equation*}
and notice that $d_{f^N}:\mathcal{A}\to A$ is an inner invariant derivation.  By direct calculation we have
\begin{equation*}
\lim_{N\to\infty} d_{f^N}(U) = UM_f^+\,,\quad\lim_{N\to\infty}d_{f^N}(U^*) = -M_f^+U^*,\textrm{ and }d_{f^N}(a(\K))=0,
\end{equation*}
for every $a(\K)\in\mathcal{A}_0$.  Thus, by the Leibniz rule, the limit 
\begin{equation*}
\lim_{N\to\infty} d_{f^N}(a)=d_f(a)
\end{equation*}
exists for all $a\in\mathcal{A}$ and is a derivation from $\mathcal{A}$ to $A$.  It follows that $d_f$ is approximately inner and invariant.
\end{proof}

\begin{prop}\label{invariant_der_A}
Given any invariant derivation $d:\mathcal{A}\to A$ there exists a number $c_0\in \C$ such that $d$ is of the unique form
\begin{equation*}
d(a) = c_0d_{\K}(a) + \tilde{d}(a)
\end{equation*}
where $\tilde{d}:\mathcal{A}\to A$ is approximately inner.
\end{prop}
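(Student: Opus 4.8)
The plan is to classify an arbitrary invariant derivation $d:\mathcal A\to A$ by evaluating it on the algebra generators $U$, $U^*$, the characters $M_\chi^+$, and the finitely supported diagonals $a(\K)$ with $a\in c_{00}(\Z_{\ge0})$, and then subtracting off explicit model derivations. The starting observation is that an invariant derivation preserves the $\Z$-grading: since $d\circ\rho_\theta^{\K}=\rho_\theta^{\K}\circ d$, one gets $d(\mathcal A_n)\subseteq A_n$. In particular $d(U)\in A_1$, so by Proposition \ref{n_cov_set_eq} there are $\alpha\in c_0(\Z_{\ge0})$ and $f\in C(G)$ with $d(U)=U(\alpha(\K)+M_f^+)$. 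I would then set $c_0:=\int_G f\,d\mu$ and $f_0:=f-c_0$, so that $\int_G f_0\,d\mu=0$, and form $\tilde d:=d_\alpha+d_{f_0}$, which is approximately inner by Lemmas \ref{der_alpha} and \ref{der_f_zero}. A direct check using $M_f^+=c_0 I+M_{f_0}^+$ shows that $d':=d-c_0 d_{\K}-\tilde d$ satisfies $d'(U)=0$. Everything then reduces to proving $d'\equiv 0$.

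To show $d'=0$ it suffices to verify that $d'$ annihilates each generator. From $U^*U=I$ one gets $U^*d'(U)+d'(U^*)U=0$, and combined with $d'(U^*)\in A_{-1}$ (Proposition \ref{n_cov_set_eq}) and $d'(U)=0$ this forces $d'(U^*)=0$. Next, $P_0=I-UU^*$ gives $d'(P_0)=-\big(U d'(U^*)+d'(U)U^*\big)=0$; since $P_k=U^kP_0(U^*)^k$, the Leibniz rule yields $d'(P_k)=0$, and because every $a(\K)$ with $a\in c_{00}(\Z_{\ge0})$ is a finite sum $\sum_k a(k)P_k$, we get $d'(a(\K))=0$. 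It remains to treat the characters $M_\chi^+$, which is where the real work lies.

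The hard part will be showing $d'(M_\chi^+)=0$. Here I would use the covariance relation $UM_\chi^+=\chi(x_1)M_\chi^+U$ (from $UM_f^+=M_{f\circ\varphi}^+U$ together with $\chi\circ\varphi=\chi(x_1)\chi$). Writing $w:=d'(M_\chi^+)=\beta(\K)+M_h^+\in A_0$ with $\beta\in c_0(\Z_{\ge0})$ and $h\in C(G)$, applying $d'$ to the covariance relation and using $d'(U)=0$ gives $Uw=\chi(x_1)wU$, which on basis vectors becomes the recursion $\beta(k)+h(x_k)=\overline{\chi(x_k)}\,(\beta(0)+h(x_0))$. Setting $C:=\beta(0)+h(x_0)$ and $F(x):=\overline{\chi(x)}\,C-h(x)\in C(G)$, the recursion reads $F(x_k)=\beta(k)\to 0$; since each tail $\{x_k\}_{k\ge K}$ of the orbit is dense by minimality, continuity of $F$ forces $F\equiv 0$, hence $h=C\overline{\chi}$ and $\beta\equiv 0$, i.e.\ $d'(M_\chi^+)=C_\chi M_{\overline\chi}^+$. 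To eliminate $C_\chi$ I would exploit multiplicativity: expanding $d'(M_\chi^+M_{\chi'}^+)=d'(M_{\chi\chi'}^+)$ and comparing coefficients of the linearly independent operators $M_\psi^+$ forces $C_\chi=0$ whenever $\chi^2\ne 1$ (using that $\widehat G$ is infinite to select a suitable $\chi'$ making $\overline{\chi\chi'},\chi\overline{\chi'},\overline{\chi}\chi'$ distinct), while for $\chi^2=1$ the identity $(M_\chi^+)^2=I$ gives $2C_\chi I=d'(I)=0$. Thus $d'(M_\chi^+)=0$ for all $\chi$, so $d'=0$ and $d=c_0 d_{\K}+\tilde d$.

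For uniqueness, I would introduce the tracial state $\omega=\tau\circ[\,\cdot\,]$ on $A$, where $\tau$ is the canonical trace on $B$ determined by $\tau(V^nM_f)=\delta_{n,0}\int_G f\,d\mu$ and $[\,\cdot\,]$ is the factor map; $\omega$ is tracial because $[\,\cdot\,]$ is a homomorphism and $\tau$ is a trace. If $(c_0-c_0')d_{\K}$ were approximately inner, say equal to $\lim_n[x_n,\,\cdot\,]$, then $\omega\big(U^*[x_n,U]\big)=\tau(V^*[x_n]V)-\tau([x_n])=0$ by traciality and $VV^*=V^*V=I$, whereas $\omega\big(U^*(c_0-c_0')d_{\K}(U)\big)=(c_0-c_0')\,\omega(U^*U)=c_0-c_0'$. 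Hence $c_0=c_0'$, and consequently $\tilde d=d-c_0 d_{\K}$ is determined as well, giving uniqueness of the decomposition.
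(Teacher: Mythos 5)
Your existence argument has the same skeleton as the paper's (read off $d(U)=U(\alpha(\K)+M_f^+)$ from invariance, subtract $c_0d_\K+d_\alpha+d_{f_0}$, show the remainder $d'$ kills every generator), and your treatment of $U$, $U^*$ and the finitely supported diagonals is fine. The gap is in the step $d'(M_\chi^+)=0$, which is exactly where the content lies. You base it on the relation $UM_\chi^+=\chi(x_1)M_\chi^+U$, quoted from the paper's displayed formula $UM_f^+=M_{f\circ\varphi}^+U$; but that displayed formula contains a $\varphi$ versus $\varphi^{-1}$ slip: computing with the actual definitions ($UE_k^+=E_{k+1}^+$, $M_f^+E_k^+=f(x_k)E_k^+$, $x_{k+1}=x_k+x_1$) gives $M_\chi^+UE_k^+=\chi(x_{k+1})E_{k+1}^+=\chi(x_1)\,UM_\chi^+E_k^+$, i.e.\ $UM_\chi^+=\overline{\chi(x_1)}\,M_\chi^+U$. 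With the correct relation your recursion yields $d'(M_\chi^+)=C_\chi M_\chi^+$ (not $C_\chi M_{\overline{\chi}}^+$), and then your multiplicativity step gives only \emph{additivity}, $C_{\chi\chi'}=C_\chi+C_{\chi'}$, which does not force $C\equiv 0$: nonzero additive functions on $\widehat{G}$ exist whenever $\widehat{G}$ has an element of infinite order (e.g.\ $G=\T^1$, $C_{\chi^n}=2\pi i n$), and these are precisely the $\delta_\partial$-type ($d/dx$) derivations that genuinely exist on $\mathcal B$. Indeed, here is a decisive sanity check: your version of the computation, run verbatim in $\mathcal B$ with $V$ in place of $U$, would conclude $\delta(M_\chi)\propto M_{\overline\chi}$ and then kill $\delta_{d/dx}$ on the irrational rotation algebra, contradicting Lemma \ref{partial_der} and Section 4 of the paper. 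So algebra in $\widehat G$ alone cannot close the argument; what distinguishes $A$ from $B$ here is the presence of the rank-one projections in the diagonal. The fix is one line using facts you already proved: since $d'(P_k)=0$ and $M_\chi^+P_k=\chi(x_k)P_k$, applying $d'$ gives $d'(M_\chi^+)P_k=0$ for all $k$, hence $d'(M_\chi^+)=0$. This is exactly the paper's (convention-independent, much shorter) argument: it shows at the outset that any invariant derivation annihilates all of $\mathcal A_0$, projections first, then $M_f^+$ via $M_f^+P_k=f(x_k)P_k$. A smaller point: even granting your form of the relation, ``$\widehat G$ is infinite'' is not by itself enough to select $\chi'$ making the three characters distinct; one needs the $2$-torsion of $\widehat G$ to be finite (true because $\widehat G$ embeds in $S^1$), or one can simply take $\chi'=\chi$ and use independence of $I$ and $M^+_{\overline{\chi^2}}$.

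Your uniqueness argument, by contrast, is correct and genuinely different from the paper's. The paper shows $d_\K$ is not approximately inner by reducing to approximation by inner \emph{invariant} derivations $[\beta_j(\K),\cdot\,]$ and deriving a contradiction from the sequences $\beta_j(k+1)-\beta_j(k)$ (citing Theorem 3.10 of \cite{KMRSW2}); your route uses the tracial state $\omega=\tau\circ[\,\cdot\,]$ on $A$ and the computation $\omega(U^*[x,U])=0$ versus $\omega(U^*d_\K(U))=1$. Modulo the standard existence of the trace $\tau$ on the crossed product (which follows from $\varphi$-invariance of Haar measure), this is clean, avoids any structure theory of the approximating commutators, and even exhibits the coefficient as $c_0=\omega(U^*d(U))$; this part of your proposal I would keep as is.
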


\begin{proof}
Let $a(\K)\in\mathcal{A}_0$ be a diagonal operator such that $a(k)\in c_{00}(\Z_{\geq 0})$. Then, by invariance of $d$, we have $d(a(\K))\in A_0$.  Notice that since $A_0$ is precisely the algebra of diagonal operators in $A$ it is therefore a commutative algebra.  Let $P^2=P$ be a projection in $\mathcal{A}_0$.  Applying $d$ to both sides of the equation and using Leibniz's rule we have
\begin{equation*}
2Pd(P)=d(P^2)=d(P)
\end{equation*}
which implies that $(1-2P)d(P)=0$ and hence $d(P)=0$.  Since $a(\K)$ is a finite sum of projections in $\mathcal{A}_0$, it follows that $d(a(\K))=0$.

Let $P_k$ be the one-dimensional orthogonal projection onto the span of $E_k$.  Then $P_k\in\mathcal{A}_0$ and thus $d(P_k)=0$.  We have the following formula:
$$M_f^+P_k=f(x_k)P_k.$$ 
Therefore, applying $d$ to both sides, we obtain:
\begin{equation*}
d(M_f^+)P_k + M_f^+d(P_k) =f(x_k)d(P_k).
\end{equation*}
It follows that $d(M_f^+)P_k=0$ for all $k\in\Z_{\geq 0}$ and so, $d(M_f^+)=0$. It follows from Proposition \ref{n_cov_set_eq} that $d(a)=0$ for all $a\in\mathcal A_0$.

Notice that, by the invariance property of $d$, we have
\begin{equation}\label{d_inv_decomp}
\begin{aligned}
d(U) &= U(\alpha_0(\K) + M^+_{f_0}) \\
d(U^*) &=-(\alpha_0(\K) +M^+_{f_0})U^*
\end{aligned}
\end{equation}
for some $\alpha_0(k)\in c_0(\Z_{\geq 0})$ and $f_0\in C(G)$. 

Let $c_0$ be the following integral
\begin{equation*}
c_0=\int_G f_0\,d\mu.
\end{equation*}
and set $\tilde{f} = f_0 - c_0$ so that $\int_G \tilde{f}\,d\mu=0$.  By Lemmas \ref{der_alpha} and \ref{der_f_zero} and equation (\ref{d_inv_decomp}), we have the decomposition
\begin{equation*}
d = c_0d_\K + d_{\alpha_0} + d_{\tilde{f}}.
\end{equation*}
Picking $\tilde{d}=d_{\alpha_0} + d_{\tilde{f}}$ completes the proof of existence of the decomposition.

Finally, to verify uniqueness of the decomposition, we only need to check that that $d_\K$ is not approximately inner. If $d_\K$ is approximately inner then we can arrange that it can be approximated by inner invariant derivations of the form  $d_j(a) = [\beta_j(\K), a]$ with $\beta_j(k)\in c(\Z_{\geq 0})$. Since $\{\beta_j(k+1)-\beta_j(k)\}\in c_0(\Z_{\geq 0})$ we would also get $\{(k+1)-k\}\in c_0(\Z)$, which is a contradiction. Full details of an analogous result are given in Theorem 3.10 of \cite{KMRSW2}.
\end{proof}

Next we classify $n$-covariant derivations in $A$.

\begin{rem}
Let $n\neq0$ and by Proposition \ref{property_G} choose $f\in\mathcal{F}$ such that $f-f\circ\varphi^n\neq0$.  Since $f-f\circ\varphi^n$ is continuous on a compact set, the minimum is achieved and is not equal to zero.  Therefore we have
\begin{equation*}
\underset{x\in G}{\textrm{inf }}|f(x)-f(\varphi^n(x))|\neq0
\end{equation*}
and hence $M_f^+-M_{f\circ\varphi^n}^+$ is an invertible operator.  
\end{rem}
This remark is crucial for the proof of the next proposition.

\begin{prop}\label{covariant_der_A}
Let $d:\mathcal{A}\to A$ be an $n$-covariant derivation where $n\neq0$.  There exists an $\beta(\K)\in A_0$ such that
\begin{equation*}
\begin{aligned}
d(a)&=[U^n\beta(\K),a] &&\textrm{ for }n>0 \\
d(a)&=[\beta(\K)(U^*)^{-n},a] &&\textrm{ for }n<0,
\end{aligned}
\end{equation*}
and hence $d$ is an inner derivation.
\end{prop}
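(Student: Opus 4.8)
The plan is to show that $d$ is forced to be implemented by an element of the form $U^n\beta(\K)$ with $\beta(\K)\in A_0$, by reading off the candidate from the action of $d$ on the rank-one projections and then verifying it on the generators. Throughout I would assume $n>0$; the case $n<0$ is entirely analogous, with $(U^*)^{-n}$ playing the role of $U^n$. First I would record the grading shift implied by covariance: evaluating the identity $(\rho^{\K}_\theta)^{-1}d(\rho^{\K}_\theta(a))=e^{-in\theta}d(a)$ on $a\in\mathcal A_m$ gives $\rho^{\K}_\theta(d(a))=e^{i(n+m)\theta}d(a)$, so $d$ maps $\mathcal A_m$ into $A_{n+m}$. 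In particular, letting $P_k$ be the rank-one projection onto $E^+_k$ (so $P_k\in\mathcal A_0$), we have $d(P_k)\in A_n$, which by Proposition \ref{n_cov_set_eq} is supported on a single off-diagonal.

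Writing $P_{i,j}:=U^iP_0(U^*)^j$ for the rank-one operator sending $E^+_j$ to $E^+_i$, I would push the relations $P_k^2=P_k$ and $P_kP_{k-n}=0$ through $d$ using the Leibniz rule. The idempotent relation collapses $d(P_k)$ to two terms, $d(P_k)=s_kP_{k+n,k}+t_kP_{k,k-n}$, and the orthogonality relation (which uses $n\neq0$) forces $t_k=-s_{k-n}$. Thus the action of $d$ on all the $P_k$ is encoded in a single scalar sequence $(s_k)_{k\ge0}$ (with $s_j:=0$ for $j<0$), and a direct computation of $[U^n\beta(\K),P_k]=\beta(k)P_{k+n,k}-\beta(k-n)P_{k,k-n}$ shows this is exactly $d(P_k)$ once we set $\beta(k):=s_k$.

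The crux, and the step I expect to be the main obstacle, is showing that $(s_k)$ genuinely defines an element $\beta(\K)\in A_0$, i.e. that it is bounded with continuous asymptotics along the orbit; this is precisely where the Remark preceding the statement is needed. Choosing $f\in\mathcal F$ with $h:=f-f\circ\varphi^n$ bounded away from zero, I would apply $d$ to $M^+_fP_k=f(x_k)P_k$; using the form of $d(P_k)$ and $x_{k+n}=\varphi^n(x_k)$ this yields $d(M^+_f)=U^n\,\diag\big(s_k\,h(x_k)\big)$. Since $d(M^+_f)\in A_n$, Proposition \ref{n_cov_set_eq} gives $s_k\,h(x_k)=b(k)+g(x_k)$ for some $b\in c_0(\Z_{\ge0})$ and $g\in C(G)$. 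As $h$ is continuous and nowhere zero on the compact group $G$, both $1/h$ and $g/h$ lie in $C(G)$ and $b(k)/h(x_k)\in c_0(\Z_{\ge0})$; dividing by $h(x_k)$ gives $s_k=b(k)/h(x_k)+(g/h)(x_k)$, which says exactly that $\beta(\K)=\diag(s_k)\in A_0$. Hence $U^n\beta(\K)\in A_n\subseteq A$ and $\operatorname{ad}(U^n\beta(\K))$ is a genuine inner $n$-covariant derivation of $\mathcal A$.

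Finally I would set $\tilde d:=d-\operatorname{ad}(U^n\beta(\K))$ and show $\tilde d=0$. By construction $\tilde d$ is $n$-covariant with $\tilde d(P_k)=0$ for all $k$. Pushing $M^+_fP_k=f(x_k)P_k$ through $\tilde d$ then gives $\tilde d(M^+_f)P_k=0$ for every $k$ and every $f\in C(G)$, whence $\tilde d(M^+_f)=0$; and pushing $UP_k=P_{k+1}U$ through $\tilde d$ gives $\tilde d(U)P_k=P_{k+1}\tilde d(U)$, whose two sides are supported on disjoint off-diagonals since $n\neq0$, forcing $\tilde d(U)=0$ and likewise $\tilde d(U^*)=0$. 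As $\tilde d$ vanishes on all generators of $\mathcal A$, the Leibniz rule gives $\tilde d\equiv0$, that is $d=\operatorname{ad}(U^n\beta(\K))$, completing the proof.
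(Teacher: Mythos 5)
Your proof is correct, but it follows a genuinely different route from the paper's. The paper never touches the minimal projections: it uses covariance to write $d(U)=U^{n+1}\alpha(\K+I)$ and $d(U^*)=-U^{n-1}\alpha(\K)$ with $\alpha(\K)\in A_0$, introduces a twisted derivation $\tilde d$ on the commutative algebra $\mathcal A_0$ via $d(a(\K))=U^n\tilde d(a(\K))$, derives the identity $\tilde d(a(\K))\left(b(\K)-b(\K+nI)\right)=\tilde d(b(\K))\left(a(\K)-a(\K+nI)\right)$, and cites Theorem 3.4 of \cite{KMRSW2} to produce $\beta$ with $\tilde d(a(\K))=\beta(\K)\left(a(\K)-a(\K+nI)\right)$; applying $d$ to the relation $U^*a(\K)=a(\K+I)U^*$ then pins down $\beta(k)=\sum_{j=0}^{k}\alpha(j)$, the two derivations agree on generators, and the Remark is used only at the very end --- exactly as in your argument --- to conclude $\beta(\K)\in A_0$. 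Your substitute for the functional-equation step, reading $s_k$ off from $d(P_k)$ via Leibniz applied to $P_k^2=P_k$ and $P_kP_{k-n}=0$, makes the proof self-contained (no appeal to \cite{KMRSW2}), and your division step $s_k=b(k)/h(x_k)+(g/h)(x_k)$ spells out the membership $\beta(\K)\in A_0$ in more detail than the paper's one-line appeal to invertibility. What the paper's route buys in exchange is independence from rank-one projections: it transfers essentially verbatim to derivations $\delta:\mathcal B\to B$ (Proposition \ref{covariant_der_B}), where $\mathcal B_0$ consists of multiplication operators $M_f$ and has no nontrivial projections when $G$ is connected, whereas your argument is genuinely special to $A$. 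One small correction of wording: in the step $\tilde d(U)P_k=P_{k+1}\tilde d(U)$ the two sides lie on the \emph{same} off-diagonal (both in $A_{n+1}$) but are supported on distinct matrix units, $P_{k+n+1,k}$ versus $P_{k+1,k-n}$; it is this disjointness of supports, not of off-diagonals, that forces $\tilde d(U)=0$ when $n\neq0$.
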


\begin{proof}
We only discuss the case of $n>0$ as the case of $n<0$ is completely analogous.  By definition of $n$-covariance there exists an $\alpha(\K)\in A_0$ such that
\begin{equation*}
d(U)=U^{n+1}\alpha(\K+I)\textrm{ and }d(U^*) = -U^{n-1}\alpha(\K).
\end{equation*}
We define a ``twisted'' derivation $\tilde{d}:\mathcal{A}_0\to A_0$ by $d(a(\K)) = U^n\tilde{d}(a(\K))$ for $a(\K)\in\mathcal{A}_0$.  A direct computation yields
\begin{equation*}
\begin{aligned}
\tilde{d}(a(\K)b(\K)) &= \tilde{d}(a(\K))b(\K) + a(\K+nI)\tilde{d}(b(\K)) \\
\tilde{d}(b(\K)a(\K)) &= \tilde{d}(b(\K))a(\K) + b(\K+nI)\tilde{d}(a(\K))
\end{aligned}
\end{equation*}
for $a(\K),b(\K)\in \mathcal{A}_0$.  Since $\mathcal{A}_0$ and $A_0$ are commutative algebras we get
\begin{equation*}
\tilde{d}(a(\K))\left(b(\K)-b(\K+nI)\right) = \tilde{d}(b(\K))\left(a(\K)-a(\K+nI)\right).
\end{equation*}
Similarly to the proof in Theorem 3.4 in \cite{KMRSW2}, there must exist a $\beta(\K)$ such that
\begin{equation*}
\tilde{d}(a(\K))=\beta(\K)\left(a(\K)-a(\K+nI)\right).
\end{equation*}
Consequently, we have the following formula
\begin{equation*}
d(a(\K)) = U^n\beta(\K)(a(\K) - a(\K+nI)).
\end{equation*}

Next we apply $d$ to the commutation relation $U^*a(\K) = a(\K+I)U^*$ for a diagonal operator $a(\K)\in\mathcal{A}_0$, and obtain:
\begin{equation*}
\begin{aligned}
&U^{n-1}(-\alpha(\K)a(\K) + \beta(\K)a(\K) - \beta(\K)a(\K+nI)) = \\
&\quad=U^{n-1}(\beta(\K-I)a(\K)-a(\K+nI)\beta(\K-I)-a(\K+nI)\alpha(\K)),
\end{aligned}
\end{equation*}
where we define $\beta(-1):=0$.  Rearranging these terms gives:
\begin{equation*}
\alpha(\K)(a(\K+nI)-a(\K)) = (\beta(\K)-\beta(\K-I))(a(\K+nI)-a(\K))
\end{equation*}
for all $a(\K)\in \mathcal{A}_0$.  It therefore follows that $\beta(\K)-\beta(\K-I) = \alpha(\K)$.  Thus $\beta(k)$ is uniquely determined by
\begin{equation*}
\beta(k) = \sum_{j=0}^k\alpha(j),
\end{equation*}
and it follows that
\begin{equation*}
d(a) = [U^n\beta(\K),a]
\end{equation*}
for any $a\in\mathcal{A}$, since both sides of the above equation are derivations, and they agree on the generators of the polynomial algebra $\mathcal{A}$. By the remark preceding the statement of the proposition, if $f\in\mathcal{F}$ is such that $M_f^+-M_{f\circ\varphi^n}^+$ is invertible, we can apply $d$ to $M_f^+$ to get
\begin{equation*}
A_0\ni d(M_f^+) = U^n\beta(\K)(M_f^+ - M_{f\circ\varphi^n}^+).
\end{equation*}
Therefore, it follows that we must have $\beta(\K)\in A_0$, and the proof is complete.
\end{proof}

To classify all derivations $d:\mathcal{A}\to A$ we need to define the Fourier coefficients of $d$ following the ideas of \cite{BEJ}.

 \begin{defin}\label{Fou_comp}
If $d$ is a derivation in $A$, the {\it $n$-th Fourier component} of $d$ is defined as: 
$$d_n(a)= \frac 1{2\pi} \int_0^{2\pi} e^{in\theta} (\rho^{\mathbb K}_{\theta})^{-1}d\rho^{\mathbb K}_{\theta}(a)\; d\theta.$$
 \end{defin}
A direct calculation shows that if $d:\mathcal{A}\to A$ is a derivation then $d_n:\mathcal{A}\to A$ is an $n$-covariant derivation.  

We have the following key Ces\`aro mean convergence result for Fourier components of $d$, which is more generally valid for unbounded derivations in any Banach algebra with the continuous circle action preserving the domain of the derivation: if $d$ is a derivation in $A$ then
\begin{equation}\label{Ces_eq}
d(a)=\lim_{M \rightarrow \infty } \frac 1{M+1} \sum_{j=0}^M \left(\sum_{n=-j}^j d_n(a)\right),
\end{equation}
for every $a\in \mathcal{A}$, see Lemma 4.2 in \cite{KMRSW2} for more details.

The following theorem classifies all derivations $d:\mathcal{A}\to A$.

\begin{theo}\label{der_decomp_A}
Let $d:\mathcal{A}\to A$ be any derivation.  Then there exists $c_0\in\C$ such that $d$ has the following decomposition:
\begin{equation*}
d(a) = c_0d_\K(a) + \tilde{d}(a)
\end{equation*}
where $\tilde{d}$ is an approximately inner derivation.
\end{theo}

\begin{proof}
Let $d_0$ be the $0$-th Fourier component of $d$. It is an invariant derivation, so by Proposition \ref{invariant_der_A} we have the unique decomposition:
\begin{equation*}
d_0(a)= c_0d_\K(a) + \tilde{d}_0(a)= c_0[\K, a] + \tilde{d}_0(a),
\end{equation*}
for every $a\in \mathcal{A}$, where $\tilde{d}_0$ is an approximately inner derivation.
From Proposition \ref{covariant_der_A} we have that the Fourier components $d_n$, $n \neq 0$ are inner derivations.   It follows from equation \eqref{Ces_eq}, by extracting $d_0$, that we have:
\begin{equation*}
d(a)=d_0(a)+\lim_{M\to\infty}\frac 1{M+1} \sum_{j=1}^M \left(\sum_{|n|\leq j,\, n\ne 0}d_n(a)\right).
\end{equation*}
The terms under the limit sign are all finite linear combinations of $n$-covariant derivations and so they are inner derivations themselves, meaning that the limit is approximately inner, which ends the proof.
\end{proof}

We also have the following useful but weaker convergence result for the Fourier components of derivations.
\begin{prop}\label{Fourier_comp_converge_A}
Let $d:\mathcal{A}\to A$ be any derivation.  Then for every $x\in c_{00}(\Z_{\ge0})\subseteq\ell^2(\Z_{\ge0})$ and $a\in\mathcal{A}$,
\begin{equation*}
\sum_{n\in\Z}d_n(a)x = d(a)x.
\end{equation*}
We say that $\sum_{n\in\Z}d_n(a)$ converges densely pointwise on the set $c_{00}(\Z_{\ge0})$.
\end{prop}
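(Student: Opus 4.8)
The plan is to reduce the proposition to the norm convergence, in $H_+$, of a single Hilbert-space-valued Fourier series; the crucial point is that applying $d(a)$ to a finitely supported vector $x$ tames the otherwise badly behaved operator-valued series $\sum_n d_n(a)$. First I would use that $a\in\mathcal{A}$ has only finitely many nonzero covariant components, writing $a=\sum_{|m|\le N}a^{(m)}$ with $a^{(m)}\in\mathcal{A}_m$. Since $\rho_\theta^{\K}(a^{(m)})=e^{im\theta}a^{(m)}$ and $d$ is linear, the integrand in Definition \ref{Fou_comp}, applied to $x=\sum_l x_lE^+_l$ (a finite sum), becomes
\begin{equation*}
g(\theta):=(\rho_\theta^{\K})^{-1}d(\rho_\theta^{\K}(a))\,x=\sum_{|m|\le N}\sum_{l}x_l\,e^{i(m+l)\theta}\,e^{-i\theta\K}w^{(m)}_l,
\end{equation*}
where $w^{(m)}_l:=d(a^{(m)})E^+_l\in H_+$ and $l$ ranges over the finite support of $x$. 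Thus $g$ is a finite combination of $H_+$-valued functions of $\theta$.

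Next I would record two observations. On one hand, $d_n(a)x=\frac1{2\pi}\int_0^{2\pi}e^{in\theta}g(\theta)\,d\theta$ is, up to the sign convention, the $n$-th Fourier coefficient of $g$, so that $\sum_{|n|\le M}d_n(a)x$ is precisely the symmetric Fourier partial sum of $g$ evaluated at $\theta=0$. On the other hand, $g(0)=d(a)x$ because $\rho_0^{\K}$ is the identity. Hence the proposition is equivalent to the statement that the Fourier series of $g$ converges to $g(0)$ in the norm of $H_+$.

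The heart of the matter is then an elementary observation about each building block $\theta\mapsto e^{ip\theta}e^{-i\theta\K}w$ with $w\in H_+$ and $p\in\Z$. Expanding $e^{-i\theta\K}w=\sum_{k\ge0}w_ke^{-ik\theta}E^+_k$, its Fourier coefficients sit at the frequencies $\{p-k:k\ge0\}$ and their $H_+$-norms are square summable, since $\sum_k|w_k|^2=\|w\|^2<\infty$. Consequently, for $M\ge|p|$ the symmetric partial sum up to frequency $M$ equals $\sum_{0\le k\le p+M}w_kE^+_k$, which converges in $H_+$ to $w$, the value of the block at $\theta=0$. Summing the finitely many blocks with coefficients $x_l$ yields $\sum_{|n|\le M}d_n(a)x\to\sum_{|m|\le N}d(a^{(m)})x=d(a)x$, as desired.

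The main obstacle to keep in mind is that, by contrast with the operator norm, where equation \eqref{Ces_eq} furnishes only Ces\`aro summability of $\sum_n d_n(a)$, here one genuinely needs the finiteness of the support of $x$: it is exactly this that converts the unruly operator-valued series into a finite sum of vector-valued series whose coefficient norms are square summable along a half-line of frequencies, forcing ordinary norm convergence of the symmetric partial sums. Beyond this, only the bookkeeping of the frequency shift $p=m+l$ and of the Fourier sign convention in Definition \ref{Fou_comp} needs care, and both are routine.
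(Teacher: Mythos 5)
Your proof is correct, but it is organized differently from the paper's. The paper first reduces, via the Leibniz rule, to the generators $U$, $U^*$, $M^+_f$ and, by linearity, to single basis vectors $x=E^+_k$; it then computes matrix elements, using the $n$-covariance of $d_n$ to show that $\langle E^+_j, d_n(a)E^+_k\rangle$ equals $\langle E^+_j, d(a)E^+_k\rangle$ when $j=n+k$ and vanishes otherwise, so that $\sum_n d_n(a)E^+_k$ is precisely the basis expansion of the vector $d(a)E^+_k\in\ell^2(\Z_{\ge0})$ and hence converges to it. You instead keep $a\in\mathcal{A}$ arbitrary, decompose it into its finitely many covariant components, and recast the claim as norm convergence at $\theta=0$ of the vector-valued Fourier series of $g(\theta)=(\rho^{\K}_\theta)^{-1}d(\rho^{\K}_\theta(a))x$, which you resolve by noting that each block $e^{ip\theta}e^{-i\theta\K}w$ has frequencies on the half-line $\{p-k : k\ge0\}$ with mutually orthogonal coefficients $w_kE^+_k$, so its symmetric partial sums are truncated basis expansions of $w$. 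The mechanism is the same in both arguments---covariance localizes the frequency content, and the sum over $n$ reassembles an $\ell^2$-convergent expansion---but your route has a genuine merit: the paper's opening claim that ``by Leibniz rule we only need to verify the formula on generators'' is not automatic, since passing from generators to products implicitly uses that the Fourier components themselves satisfy $d_n(ab)=a\,d_n(b)+d_n(a)\,b$, together with the fact that elements of $\mathcal{A}$ map $c_{00}(\Z_{\ge0})$ into itself; your argument needs no such reduction because the covariant decomposition of a general element of $\mathcal{A}$ is already finite. What the paper's version buys in return is brevity: once reduced to a single generator and a single basis vector, the entire proof is a two-line computation of matrix coefficients.
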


\begin{proof}
By Leibniz rule we only need to verify the above formula on generators of $\mathcal A$.  Moreover, it is enough to consider only $x=E_k^+$, since $c_{00}(\Z_{\ge0})$ consists of finite linear combinations of such $x$'s.  Below we show the details for $a=M_f^+$, as the calculations for  $a=U$ and $a=U^*$ are very similar.  We have the following basis decomposition:
\begin{equation*}
d(a)E_k^+ = \sum_{j=0}^\infty \langle E_j^+, d_n(a)E_k^+\rangle E_j^+.
\end{equation*}
Using the definition of the $n$-th Fourier components $d_n$ of $d$ and the fact that $d_n$ are $n$-covariant, a direct calculation gives:
\begin{equation*}
\langle E_j^+, d_n(a)E_k^+\rangle = \left\{
\begin{aligned}
&\langle E_j^+, d(a)E_k^+\rangle &&\textrm{ if }n+k=j\\
&0 &&\textrm{ otherwise.}
\end{aligned}\right.
\end{equation*}
It follows that
\begin{equation*}
\left\langle E_j^+, \sum_{n\in\Z}^\infty d_n(a)E_k^+\right\rangle = \langle E_j^+, d(a)E_k^+\rangle,
\end{equation*}
completing the proof.
\end{proof}

\subsection{Derivations in $B$}

Next we classify derivations in $B$ starting with the invariant derivations.  It turns out that there are new types of invariant derivations in $B$ that were not present in $A$. We describe these in the following lemma.

\begin{lem}\label{partial_der}
Let $\partial: \mathcal{F}\to C(G)$ be any derivation such that 
$$\partial f\circ\varphi = \partial(f\circ\varphi)$$ 
for all $f\in\mathcal{F}$, which we call a $\varphi$ invariant derivation in $C(G)$.  Then there exists a unique invariant derivation $\delta_{\partial}:\mathcal{B}\to B$ such that 
$$\delta_{\partial}(V)=0 \textrm{ and }\ \delta_{\partial}(M_f)= M_{\partial f}.$$
\end{lem}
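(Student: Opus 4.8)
The plan is to define $\delta_\partial$ on the generators of $\mathcal{B}$ and to extend it by the Leibniz rule, so that the only real issue is consistency. Recall that $\mathcal{B}$ is the $*$-algebra generated by $V$, $V^{-1}$, and the operators $M_f$ with $f\in\mathcal{F}$. Setting $\delta_\partial(V)=0$ and $\delta_\partial(M_f)=M_{\partial f}$ and imposing the Leibniz rule determines $\delta_\partial$ on every element of $\mathcal{B}$, which already gives uniqueness: any two derivations agreeing on a generating set coincide. The entire content of the lemma is therefore existence, namely that this prescription is compatible with the relations defining $\mathcal{B}$.

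First I would dispose of the easy relations. From $VV^{-1}=I$ and the Leibniz rule, $0=\delta_\partial(I)=\delta_\partial(V)V^{-1}+V\delta_\partial(V^{-1})=V\delta_\partial(V^{-1})$, which forces $\delta_\partial(V^{-1})=0$, and inductively $\delta_\partial(V^n)=0$ for all $n\in\mathbb{Z}$. Next, the multiplicative relation $M_fM_g=M_{fg}$ among the diagonal operators is respected precisely because $\partial$ is a derivation on $\mathcal{F}$: applying $\delta_\partial$ to $M_fM_g$ produces $M_fM_{\partial g}+M_{\partial f}M_g=M_{f\partial g+g\partial f}=M_{\partial(fg)}$, using $\partial(fg)=f\partial g+g\partial f$ in $C(G)$.

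The decisive relation, and the main obstacle, is the commutation relation $VM_f=M_{f\circ\varphi}V$ that encodes the dynamics. Applying $\delta_\partial$ to the left side yields $VM_{\partial f}$ (since $\delta_\partial(V)=0$), while applying it to the right side yields $M_{\partial(f\circ\varphi)}V$. Consistency thus demands $VM_{\partial f}=M_{\partial(f\circ\varphi)}V$. But $\partial f\in C(G)$, so the very same commutation relation gives $VM_{\partial f}=M_{(\partial f)\circ\varphi}V$, and comparing the two expressions reduces the requirement to $(\partial f)\circ\varphi=\partial(f\circ\varphi)$ --- exactly the $\varphi$-invariance hypothesis on $\partial$. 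This is where the assumption enters in an essential way, and it is the crux of the argument.

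Having verified that all defining relations are preserved, $\delta_\partial$ extends to a well-defined derivation on $\mathcal{B}$. Concretely, writing a general element in its unique graded form $b=\sum_n V^nM_{f_n}$ with $f_n\in\mathcal{F}$, the Leibniz rule together with $\delta_\partial(V^n)=0$ gives the explicit formula $\delta_\partial(b)=\sum_n V^nM_{\partial f_n}$; one may alternatively take this as the definition and check the Leibniz rule directly on homogeneous elements $V^nM_f$, where the iterated invariance $\partial(f\circ\varphi^m)=(\partial f)\circ\varphi^m$ again carries the computation. Since $\partial f_n\in C(G)$, each term $V^nM_{\partial f_n}$ lies in $B$, so $\delta_\partial$ indeed maps $\mathcal{B}$ into $B$. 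Finally, invariance is immediate on generators: $\rho_\theta^{\mathbb{L}}(V)=e^{i\theta}V$ with $\delta_\partial(V)=0$, while $\rho_\theta^{\mathbb{L}}$ fixes $M_f$ and also fixes $M_{\partial f}\in B_0$; since $(\rho_\theta^{\mathbb{L}})^{-1}\delta_\partial\rho_\theta^{\mathbb{L}}$ and $\delta_\partial$ are derivations agreeing on generators, they coincide on all of $\mathcal{B}$, establishing that $\delta_\partial$ is invariant.
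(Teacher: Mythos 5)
Your proposal is correct and follows essentially the same route as the paper: define $\delta_\partial$ on the generators, extend by the Leibniz rule, and observe that consistency with the crossed-product relation $VM_fV^{-1}=M_{f\circ\varphi}$ reduces exactly to the $\varphi$-invariance hypothesis $(\partial f)\circ\varphi=\partial(f\circ\varphi)$. Your additional verifications (the relations $VV^{-1}=I$ and $M_fM_g=M_{fg}$, the explicit graded formula $\delta_\partial\bigl(\sum_n V^nM_{f_n}\bigr)=\sum_n V^nM_{\partial f_n}$, and the check that $\delta_\partial$ is invariant under $\rho_\theta^{\mathbb{L}}$) are sound elaborations of details the paper leaves implicit.
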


\begin{proof}
Since $(V,M_f)$ is a defining representation for $C(G)\rtimes_\varphi\Z$, the only relation in the polynomial algebra $\mathcal{B}$ is 
\begin{equation*}
VM_f V^{-1} =M_{f\circ\varphi}.
\end{equation*}
Define the $\delta_\partial$ on the generators as above by $\delta_{\partial}(V)=0$ and $\delta_{\partial}(M_f)= M_{\partial f}$.   Using the Leibniz rule we try to extend this definition to all $\mathcal{B}$.  To verify that $\delta_\partial$ is a well-defined derivation from $\mathcal{B}\to B$, we thus need to check that it preserves the relation.  Applying $\delta_\partial$ to both sides of the relation yields $M_{\partial f\circ\varphi} = M_{\partial(f\circ\varphi)}$, completing the proof.
\end{proof}

As with algebra $A$ there is a simple example of an invariant derivation  which is given by
\begin{equation*}
\delta_{\mathbb{L}}(b) = [\mathbb{L},a]
\end{equation*}
where $b\in\mathcal{B}$.  This derivation is well defined because $\mathcal{B}$ is the space of polynomials in $V$, $V^{-1}$, and $M_f$, and we have $[\mathbb{L},V]=V$, $[\mathbb{L},V^{-1}]= -V^{-1}$ and $[\mathbb{L},M_f]=0$.

\begin{lem}\label{delta_f_zero}
For any $f\in C(G)$ such that
\begin{equation*}
\int_G f\,d\mu =0,
\end{equation*}
there exists a unique derivation $\delta_f:\mathcal{B}\to B$ such that 
\begin{equation*}
\delta_f(V) = VM_f,\quad \delta_f(V^{-1}) = -M_fV^{-1},\quad \delta_f(M_g) = 0,
\end{equation*}
where $M_g\in \mathcal{B}_0$.  Moreover $d_f$ is an approximately inner invariant derivation.
\end{lem}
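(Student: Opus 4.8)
The plan is to follow the construction in Lemma \ref{der_f_zero} almost verbatim, replacing the isometry $U$ and the forward-orbit operators $M_f^+$ by the unitary $V$ and the full-orbit multiplication operators $M_f$, and to produce $\delta_f$ as a limit of inner derivations implemented by diagonal multiplication operators.

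First I would invoke the density of $\mathcal{F}$ in $C(G)$ to choose a sequence $\{f^N\}\subseteq\mathcal{F}$ with $\int_G f^N\,d\mu=0$ and $f^N\to f$ uniformly. By Proposition \ref{property_F} each $f^N$ is a coboundary, so there is $g^N\in\mathcal{F}$ with $f^N=g^N\circ\varphi-g^N$. I would then set $\delta_{f^N}(b)=[M_{g^N},b]$; since $M_{g^N}$ is diagonal and therefore fixed by $\rho_\theta^{\mathbb{L}}$, each $\delta_{f^N}$ is an inner invariant derivation on $\mathcal{B}$.

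The heart of the argument is the evaluation of $\delta_{f^N}$ on the generators $V$, $V^{-1}$, $M_g$. Using $VE_l=E_{l+1}$, $M_{g^N}E_l=g^N(x_l)E_l$ and $x_{l+1}=\varphi(x_l)$, a one-line computation gives $[M_{g^N},V]E_l=(g^N(\varphi(x_l))-g^N(x_l))E_{l+1}=f^N(x_l)E_{l+1}$, that is $\delta_{f^N}(V)=VM_{f^N}$; the analogous calculation yields $\delta_{f^N}(V^{-1})=-M_{f^N}V^{-1}$, while $\delta_{f^N}(M_g)=0$ because diagonal operators commute. As $f^N\to f$ uniformly we have $M_{f^N}\to M_f$ in operator norm, so on the generators $\delta_{f^N}(V)\to VM_f$ and $\delta_{f^N}(V^{-1})\to -M_fV^{-1}$, while $\delta_{f^N}(M_g)=0$ for every $N$. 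By the Leibniz rule this convergence propagates from the generators to all of $\mathcal{B}$, and the pointwise limit of a sequence of derivations is again a derivation; this defines $\delta_f:\mathcal{B}\to B$ with the required action on generators, and it is approximately inner and invariant by construction.

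Uniqueness holds because $V$, $V^{-1}$ and the $M_g$ generate $\mathcal{B}$ as a $*$-algebra, so a derivation is pinned down by its values there. The only delicate point is that the implementing operators $M_{g^N}$ need not converge in any sense, since the $g^N$ may grow without bound in $N$, so one cannot simply take a limit of the $g^N$; instead the argument rests on the convergence of the commutators $[M_{g^N},V]$ and $[M_{g^N},V^{-1}]$, which is governed entirely by the uniform convergence $f^N\to f$. This is the same mild subtlety as in Lemma \ref{der_f_zero} and is resolved in exactly the same way.
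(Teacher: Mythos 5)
Your proposal is correct and is exactly the paper's intended argument: the paper simply states that the proof is identical to that of Lemma \ref{der_f_zero}, and you have carried out that transcription to the full-orbit setting (replacing $U$, $M_f^+$ by $V$, $M_f$) faithfully, including the key point that convergence is governed by $\|M_{f^N}-M_f\|=\|f^N-f\|_\infty\to0$ rather than by any convergence of the implementing operators $M_{g^N}$. Nothing further is needed.
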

The proof is identical to that of Lemma \ref{der_f_zero}.

\begin{prop}\label{invariant_der_B}
Let $\delta:\mathcal{B}\to B$ be any invariant derivation, then there exist $c_0\in\C$ and a $\varphi$ invariant derivation in $C(G)$, $\partial: \mathcal{F}\to C(G)$, such that $\delta$ is of the unique form
\begin{equation*}
\delta(b) = c_0\delta_{\mathbb{L}}(b)+ \delta_\partial(b) + \tilde{\delta}(b)
\end{equation*}
where $\delta_\partial:\mathcal{B}\to B$ is the derivation defined in Lemma \ref{partial_der} and $\tilde{\delta}$ is approximately inner.
\end{prop}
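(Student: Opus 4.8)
The plan is to mirror the structure of the proof of Proposition \ref{invariant_der_A}, while accounting for the essential new phenomenon in $B$: unlike $A_0$, the invariant subalgebra $B_0\cong C(G)$ contains no rank-one projections (indeed $B$ has no compact operators), so the argument that forced a derivation to vanish on all of $\mathcal{A}_0$ is no longer available. The restriction of $\delta$ to $\mathcal{B}_0$ therefore produces a genuinely new piece, which will be $\delta_\partial$.

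First I would extract $\partial$. Since $\delta$ is invariant, $\delta(\mathcal{B}_0)\subseteq B_0=\{M_g:g\in C(G)\}$, so writing $\delta(M_f)=M_{\partial f}$ for $f\in\mathcal{F}$ defines a map $\partial:\mathcal{F}\to C(G)$, and the Leibniz rule together with $M_fM_g=M_{fg}$ shows $\partial$ is a derivation. To obtain the $\varphi$-invariance I would apply $\delta$ to the defining relation $VM_fV^{-1}=M_{f\circ\varphi}$. By invariance $\delta(V)\in B_1$, so Proposition \ref{n_cov_set_eq} forces $\delta(V)=VM_h$ for some $h\in C(G)$, and then $\delta(V^{-1})=-M_hV^{-1}$; the two $M_h$ contributions cancel by commutativity of $C(G)$, leaving $M_{(\partial f)\circ\varphi}=M_{\partial(f\circ\varphi)}$, i.e. $(\partial f)\circ\varphi=\partial(f\circ\varphi)$. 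Thus $\partial$ satisfies the hypothesis of Lemma \ref{partial_der} and $\delta_\partial$ is well defined.

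Next I would reduce to the situation of Lemma \ref{delta_f_zero}. Set $\delta'=\delta-\delta_\partial$; then $\delta'(M_f)=0$ for all $f\in\mathcal{F}$ and $\delta'(V)=VM_h$. Splitting $h=c_0+\tilde f$ with $c_0=\int_G h\,d\mu$ and $\int_G\tilde f\,d\mu=0$, and using $\delta_{\mathbb{L}}(V)=V$ together with Lemma \ref{delta_f_zero}, the derivation $\delta'$ agrees with $c_0\delta_{\mathbb{L}}+\delta_{\tilde f}$ on the generators $V,V^{-1},M_f$, hence everywhere. This yields $\delta=c_0\delta_{\mathbb{L}}+\delta_\partial+\tilde\delta$ with $\tilde\delta:=\delta_{\tilde f}$ approximately inner by Lemma \ref{delta_f_zero}.

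For uniqueness it suffices to show that if $\Delta=c\,\delta_{\mathbb{L}}+\delta_\eta$ is approximately inner, where $\eta$ is again a $\varphi$-invariant derivation, then $c=0$ and $\eta=0$. Writing $\Delta(b)=\lim_n[x_n,b]$, I would first average the approximants over the circle action, replacing $x_n$ by $y_n=\frac{1}{2\pi}\int_0^{2\pi}\rho_\theta^{\mathbb{L}}(x_n)\,d\theta\in B_0$; the covariance identity $[\rho_\theta^{\mathbb{L}}(x_n),V]=e^{-i\theta}\rho_\theta^{\mathbb{L}}([x_n,V])$ and its analogue for invariant elements show that $\Delta(b)=\lim_n[y_n,b]$ on each homogeneous generator, with $y_n=M_{\phi_n}$ for some $\phi_n\in C(G)$. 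Testing on $M_g$ gives $M_{\eta g}=\Delta(M_g)=\lim_n[M_{\phi_n},M_g]=0$, so $\eta=0$ since $C(G)$ is commutative. Testing on $V$ gives $cV=\Delta(V)=\lim_n M_{\psi_n}V$ with $\psi_n=\phi_n-\phi_n\circ\varphi$; since $V$ is unitary this forces $\psi_n\to c$ uniformly, while $\int_G\psi_n\,d\mu=0$ (as $\varphi$ preserves $\mu$) and $\int_G c\,d\mu=c$, whence $c=0$. I expect this uniqueness step to be the main obstacle: the delicate points are justifying the reduction to invariant approximants $y_n$ and using the $\varphi$-invariance of the Haar measure to detect the $\delta_{\mathbb{L}}$ component, since the canonical trace on $B$ alone annihilates $\delta_{\mathbb{L}}$ and hence cannot see $c$.
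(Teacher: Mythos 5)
Your proposal is correct, and its existence half is the paper's own argument almost verbatim: extract $\partial$ from $\delta|_{\mathcal{B}_0}$ via $\delta(M_f)=M_{\partial f}$, obtain the Leibniz rule from $M_{fg}=M_fM_g$, obtain $\varphi$-invariance of $\partial$ by applying $\delta$ to $VM_fV^{-1}=M_{f\circ\varphi}$ (the paper does not spell out the cancellation of the $M_h$-terms, which you do), write $\delta(V)=VM_h$ by invariance and Proposition \ref{n_cov_set_eq}, split $h=c_0+\tilde f$ with $\int_G\tilde f\,d\mu=0$, and invoke Lemmas \ref{partial_der} and \ref{delta_f_zero}. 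Where you genuinely diverge is uniqueness. The paper handles it in two terse sentences: $\delta_{\mathbb{L}}$ is not approximately inner (by the argument of Proposition \ref{invariant_der_A}), and a nonzero $\delta_\partial$ is not approximately inner because $\mathcal{F}$ is commutative; these two facts are stated separately, whereas uniqueness of the three-term decomposition really requires ruling out that any nontrivial combination $c\,\delta_{\mathbb{L}}+\delta_\eta$ is approximately inner. That joint statement is exactly what you prove: averaging the approximants over the circle action (legitimate because $\rho^{\mathbb{L}}_\theta$ is isometric and point-norm continuous, so the averaged commutators still converge on covariant elements) replaces $x_n$ by $M_{\phi_n}\in B_0$; testing on $M_g$ kills $\eta$ by commutativity of $B_0$, and testing on $V$ gives $\psi_n=\phi_n-\phi_n\circ\varphi\to c$ uniformly, incompatible with $\int_G\psi_n\,d\mu=0$ unless $c=0$. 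This is the same mechanism the paper only gestures at (``we can arrange that it can be approximated by inner invariant derivations''), but your version is complete, handles the combination rather than each term separately, and the Haar-measure test detecting $c$ is a clean substitute for the sequence-space argument the paper borrows from the $A$ setting. In short: same skeleton, but your uniqueness step is more careful than the published one.
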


\begin{proof}
Since $\delta$ is invariant, there exists $f_0\in C(G)$ such that 
\begin{equation*}
\delta(V) = VM_{f_0}.
\end{equation*}
Moreover, there exists a linear map $\partial:\mathcal{F}\to C(G)$ such that 
\begin{equation*}
\delta(M_f)=M_{\partial f}.
\end{equation*}
Applying $\delta$ to the relation $M_{fg} = M_fM_g$ gives
\begin{equation*}
M_{\partial(fg)} = M_{\partial f}M_g + M_fM_{\partial g}.
\end{equation*}
Hence $\partial$ satisfies the Leibniz rule and thus is a derivation.  Applying $\delta$ to both sides of the relation $VM_fV^{-1}=M_{f\circ\varphi}$ yields:
\begin{equation*}
\partial f\circ\varphi = \partial(f\circ\varphi),
\end{equation*}
i.e. $\partial$ is $\varphi$ invariant.

Now write $f_0=c_0 + \tilde{f}$ with $c_0\in\C$ and 
\begin{equation*}
\int_G\tilde{f}\,d\mu=0.
\end{equation*}
It follows that 
\begin{equation*}
\delta(b) = c_0\delta_{\mathbb{L}}(b)+ \delta_\partial(b) + \delta_{\tilde{f}}(b)
\end{equation*}
where $\delta_\partial:\mathcal{B}\to B$ is the derivation defined in Lemma \ref{partial_der} and $\delta_{\tilde{f}}$ is defined in Lemma \ref{delta_f_zero}.  Arguing as in the proof of Proposition \ref{invariant_der_A} we obtain that $\delta_{\mathbb{L}}$ is not approximately inner. To complete the proof we notice that a non-zero derivation $\delta_\partial$ cannot be approximately inner since $\mathcal{F}$ is commutative and hence has no non-zero inner and approximately inner derivations. This proves the uniqueness of the decomposition and finishes the proof of the proposition.
\end{proof}

Because the proof of classifying all $n$-covariant derivations in $B$ is essentially the same as in the case of $A$, we only state the result.

\begin{prop}\label{covariant_der_B}
Let $\delta:\mathcal{B}\to B$ be an $n$-covariant derivation where $n\neq0$.  There exists an $\eta(\mathbb{L})\in B_0$ such that
\begin{equation*}
\delta(b)=[U^n\eta(\mathbb{L}),b]\textrm{ for }n\neq0
\end{equation*}
Moreover $\delta$ is an inner derivation.
\end{prop}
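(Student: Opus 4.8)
The plan is to follow the proof of Proposition \ref{covariant_der_A} essentially verbatim, treating $n>0$ in detail since the case $n<0$ is entirely analogous (and, because $V$ is unitary, $V^n\in B$ for every $n\in\Z$). The one structural change is that the invariant subalgebra is now $B_0=\{M_f:f\in C(G)\}\cong C(G)$, with none of the compact diagonal perturbations $a(\K)$ that occur in $A_0$. Accordingly, the role played by $U^n\beta(\K)\in A_n$ in Proposition \ref{covariant_der_A} will here be played by an element $V^nM_\eta\in B_n$ with $M_\eta\in B_0$; this is what the statement denotes by $U^n\eta(\mathbb{L})$.

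First I would read off the form of $\delta$ on generators from $n$-covariance. Since $V\in B_1$ and $M_g\in B_0$, covariance forces $\delta(V)\in B_{n+1}$ and $\delta(M_g)\in B_n$ for every $g\in C(G)$, so that $\delta(M_g)=V^nM_{\partial g}$ for a uniquely determined linear map $\partial:\mathcal{F}\to C(G)$. Applying $\delta$ to $M_gM_h=M_{gh}$ and commuting $V^n$ past a multiplication operator by the relations of Section~2 (which replace $M_h$ by $M_{h\circ\varphi^n}$, the sign of the exponent being fixed by those conventions) gives the twisted Leibniz rule $\partial(gh)=(\partial g)h+(g\circ\varphi^n)(\partial h)$. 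Since $C(G)$ is commutative, symmetrizing in $g$ and $h$ collapses this to the single identity
\begin{equation*}
(\partial g)\,\bigl(h-h\circ\varphi^n\bigr)=(\partial h)\,\bigl(g-g\circ\varphi^n\bigr),
\end{equation*}
the exact analog of the diagonal identity appearing in the proof of Proposition \ref{covariant_der_A}.

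Next I would solve this identity for $\partial$. By the Remark preceding Proposition \ref{covariant_der_A}, which rests on Proposition \ref{property_G} and the density of the orbit and applies verbatim for either sign of the nonzero exponent, there is a $g_0\in\mathcal{F}$ for which $g_0-g_0\circ\varphi^n$ is nowhere vanishing, hence invertible in $C(G)$. Setting $\eta:=(\partial g_0)\,\bigl(g_0-g_0\circ\varphi^n\bigr)^{-1}$ produces an element of $C(G)$, and the displayed identity then forces $\partial h=\eta\,\bigl(h-h\circ\varphi^n\bigr)$ for every $h\in\mathcal{F}$. A one-line computation, again using the commutation of $V^n$ with multiplication operators, now gives
\begin{equation*}
\delta(M_g)=V^nM_{\eta(g-g\circ\varphi^n)}=\bigl[\,V^nM_\eta,\,M_g\,\bigr].
\end{equation*}
I expect this extraction of a genuine $\eta\in C(G)$ to be the crux; it is the counterpart of the verification that $\beta(\K)\in A_0$ in Proposition \ref{covariant_der_A}, and is in fact cleaner here, since a quotient of continuous functions with nonvanishing denominator is automatically continuous.

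Finally I would check consistency on the remaining generator and conclude. Applying $\delta$ to the defining relation $VM_gV^{-1}=M_{g\circ\varphi}$ (the only relation of $\mathcal{B}$) relates $\delta(V)$ to the already-determined $\eta$ and confirms that $\delta(V)=[\,V^nM_\eta,\,V\,]$, whence $\delta(V^{-1})=[\,V^nM_\eta,\,V^{-1}\,]$ from $VV^{-1}=I$. Since $V^nM_\eta\in B_n\subseteq B$, the map $b\mapsto[V^nM_\eta,b]$ is an inner derivation of $\mathcal{B}$ that agrees with $\delta$ on the generators $V$, $V^{-1}$ and all $M_g$; as both are derivations, they coincide on all of $\mathcal{B}$, proving $\delta(b)=[V^nM_\eta,b]$ and that $\delta$ is inner. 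The main obstacle is precisely this last consistency check: one must be sure that the single function $\eta$ extracted from the $B_0$-part simultaneously implements the action of $\delta$ on $V$, and it is here that the crossed-product relation does the essential work.
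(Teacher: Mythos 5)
Your proof is correct, and it uses the same ingredients as the paper's argument (which is only implicit: the paper declares the proof ``essentially the same'' as that of Proposition \ref{covariant_der_A} and omits it) --- covariance to write $\delta(M_g)=V^nM_{\partial g}$, the twisted Leibniz rule plus commutativity to get $(\partial g)(h-h\circ\varphi^{\pm n})=(\partial h)(g-g\circ\varphi^{\pm n})$, and the invertibility of $g_0-g_0\circ\varphi^{\pm n}$ coming from Proposition \ref{property_G} and the Remark. But you deploy them in the opposite order, and the reordering is a genuine structural difference. The template proof of Proposition \ref{covariant_der_A} first uses the commutation relation with the shift to build the implementing element as a summation, $\beta(k)=\sum_{j=0}^k\alpha(j)$ --- an a priori unbounded diagonal operator, whose existence is quoted from Theorem 3.4 of \cite{KMRSW2} --- and only at the very end invokes invertibility to certify $\beta(\K)\in A_0$. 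You instead use invertibility at the outset to define $\eta:=(\partial g_0)(g_0-g_0\circ\varphi^{\pm n})^{-1}$, which lies in $C(G)$ automatically, so membership in $B_0$ is free, and the crossed-product relation is demoted to a final consistency check on the remaining generator. That check, which you flag as the crux but do not compute, does close: writing $\delta(V)=V^{n+1}M_{f_V}$ by covariance and applying $\delta$ to $VM_gV^{-1}=M_{g\circ\varphi}$, with $\partial g=\eta(g-g\circ\varphi^{\pm n})$ substituted, yields $\bigl(f_V-(\eta\circ\varphi^{\pm1}-\eta)\bigr)\bigl(g-g\circ\varphi^{\pm n}\bigr)=0$, and taking $g=g_0$ forces $f_V=\eta\circ\varphi^{\pm1}-\eta$, which is exactly the coefficient of $[V^nM_\eta,V]$; then $\delta(V^{-1})=[V^nM_\eta,V^{-1}]$ follows from $VV^{-1}=I$, and agreement of two derivations on generators finishes the proof. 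Your ordering is cleaner and particularly well suited to $B$: it avoids unbounded intermediate objects and the external citation, exploiting the fact that $B_0\cong C(G)$ has no analogue of the $c_0$-part of $A_0$. The paper's ordering is the one that survives in $A$, where the implementing element has a genuine $c_0$-component that must be assembled from the recursion $\beta(\K)-\beta(\K-I)=\alpha(\K)$ before one can even ask whether it is bounded.
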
 

Finally, putting Propositions \ref{invariant_der_B} and \ref{covariant_der_B} together along with the comment Ces\`aro mean convergence result for Fourier components of $d$ we have the following result.

\begin{theo}\label{der_decomp_B}
Let $\delta:\mathcal{B}\to B$ be any derivation.  Then there exists $c_0\in\C$ and a $\varphi$ invariant derivation  $\partial:\mathcal{F}\to C(G)$ such that $\delta$ has the following unique decomposition:
\begin{equation*}
\delta(b) = c_0\delta_{\mathbb{L}}(b) + \delta_\partial(b) +\tilde{\delta}(b)
\end{equation*}
where $\delta_\partial$ is the derivation defined in Lemma \ref{partial_der} and $\tilde{\delta}$ is an approximately inner derivation.
\end{theo}

We also state here a dense pointwise convergence result for Fourier components $\delta_n$ of a derivation $\delta:\mathcal B\to B$, which is similar to Proposition \ref{Fourier_comp_converge_A} and has completely analogous proof.
\begin{prop}\label{Fourier_comp_converge_B}
Let $\delta:\mathcal{B}\to B$ be any derivation.  Then for every $x\in c_{00}(\Z)\subseteq\ell^2(\Z)$ and $b\in\mathcal{B}$,
\begin{equation*}
\sum_{n\in\Z} \delta_n(b)x = \delta(b)x,
\end{equation*}
and we say that $\sum_{n\in\Z}\delta_n(b)$ converges densely pointwise on the set $c_{00}(\Z)$.
\end{prop}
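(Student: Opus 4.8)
The plan is to mirror the proof of Proposition \ref{Fourier_comp_converge_A}, working now in the two-sided Hilbert space $H=\ell^2(\Z)$ with its canonical basis $\{E_l\}_{l\in\Z}$ and with the circle action $\rho_\theta^{\mathbb{L}}$ in place of $\rho_\theta^{\K}$. By analogy with Definition \ref{Fou_comp}, the $n$-th Fourier component is
\[
\delta_n(b)=\frac{1}{2\pi}\int_0^{2\pi}e^{in\theta}(\rho_\theta^{\mathbb{L}})^{-1}\delta(\rho_\theta^{\mathbb{L}}(b))\,d\theta,
\]
and each $\delta_n:\mathcal{B}\to B$ is an $n$-covariant derivation. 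First I would reduce to the generators of $\mathcal{B}$: since every $\delta_n$ is a derivation, the densely defined map $b\mapsto\sum_n\delta_n(b)$ obeys the Leibniz rule on $c_{00}(\Z)$ once convergence is known, using that $\mathcal{B}$ maps $c_{00}(\Z)$ into itself and that the generators act as bounded operators. Hence it suffices to establish the identity for $b\in\{V,V^{-1},M_f\}$, and since $c_{00}(\Z)$ is spanned by the $E_k$, it further suffices to take $x=E_k$ for a single $k\in\Z$.

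The heart of the argument is a selection computation on matrix coefficients. Each generator is homogeneous under $\rho_\theta^{\mathbb{L}}$, say of degree $m$ (with $m=1$ for $V$, $m=-1$ for $V^{-1}$, $m=0$ for $M_f$), so that $\rho_\theta^{\mathbb{L}}(b)=e^{im\theta}b$. Inserting this into the definition of $\delta_n$ and using that $(\rho_\theta^{\mathbb{L}})^{-1}$ acts on a matrix coefficient by the phase $e^{i\theta(k-j)}$ gives
\[
\langle E_j,\delta_n(b)E_k\rangle=\langle E_j,\delta(b)E_k\rangle\cdot\frac{1}{2\pi}\int_0^{2\pi}e^{i\theta(n+m+k-j)}\,d\theta,
\]
and the angular integral is the Kronecker delta that vanishes unless $j=n+m+k$. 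Thus for fixed $j,k$ exactly one index $n$ contributes, and for that index the coefficient equals $\langle E_j,\delta(b)E_k\rangle$. Equivalently, this reflects the fact that $\delta_n(b)$, lying in the spectral subspace $B_{n+m}$, carries $E_k$ into the line spanned by $E_{k+n+m}$, as described by Proposition \ref{n_cov_set_eq}.

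Consequently, for each fixed $k$ the partial sum $\sum_{|n|\le N}\delta_n(b)E_k$ is precisely the truncation of $\delta(b)E_k$ to the coordinates $j$ with $|j-k|\le N$; since $\delta(b)E_k\in\ell^2(\Z)$, these truncations converge in norm to $\delta(b)E_k$, which yields $\sum_{n\in\Z}\delta_n(b)E_k=\delta(b)E_k$ and settles the base case. The remainder is routine bookkeeping, and I expect the only genuine subtlety to be the justification of the Leibniz reduction for an infinite series: one must interchange the bounded generators with the sum over $n$ (legitimate by continuity) and confirm that the needed partial-sum convergence is available at each inductive step. I would note that, compared with Proposition \ref{Fourier_comp_converge_A}, the two-sided setting is in fact slightly cleaner, since $V$ is unitary and there are no boundary truncation effects at the origin of the kind produced by the isometry $U$ on $H_+$.
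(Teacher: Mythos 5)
Your proposal is correct and takes essentially the same route as the paper's proof (which is carried out for Proposition \ref{Fourier_comp_converge_A} and declared ``completely analogous'' for $B$): reduce to generators via the Leibniz rule using that $\mathcal{B}$ preserves $c_{00}(\Z)$, reduce to $x=E_k$, and use the angular integral in Definition \ref{Fou_comp} to show that for fixed $j,k$ exactly one Fourier component $\delta_n(b)$ has a nonzero matrix coefficient $\langle E_j,\delta_n(b)E_k\rangle$, equal to $\langle E_j,\delta(b)E_k\rangle$. The only trivial slip is that your truncation window should be $|j-k-m|\le N$ rather than $|j-k|\le N$, which does not affect the norm convergence of the partial sums.
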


\section{Lifting Derivations}

The first important observation is that any derivation in  algebra $A$ preserves compact operators.

\begin{prop}\label{der_preserve_compact}
If $d:\mathcal{A}\to  A$ is a derivation, then $d: \mathcal{A}\cap\mathcal{K}\to \mathcal K$.
\end{prop}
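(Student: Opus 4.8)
The plan is to identify the intersection $\mathcal{A}\cap\mathcal{K}$ concretely and then apply the Leibniz rule to a convenient spanning set of it.

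First I would characterize $\mathcal{A}\cap\mathcal{K}$ as the linear span of the rank-one operators $P_{k,l}=U^kP_0(U^*)^l$, which act by $P_{k,l}E^+_j=\delta_{j,l}E^+_k$ and whose finite combinations are exactly the finite-rank operators in the basis $\{E^+_k\}$. To prove the nontrivial inclusion I would take $a\in\mathcal{A}\cap\mathcal{K}$ in its canonical form and argue that the multiplication pieces $M^+_{f_n^\pm}$ must vanish: since $\mathcal{K}$ is preserved by conjugation with the unitaries $e^{i\theta\K}$, it is invariant under the automorphisms $\rho^{\K}_\theta$ and hence under each Fourier component, so each $n$-covariant piece $U^nM^+_{f_n^+}$ (and its negative-$n$ analogue) is compact; multiplying by $(U^*)^n$ then shows $M^+_{f_n^+}$ is compact, and since a multiplication operator $M^+_f$ is compact only when $f(x_k)\to0$, density of $\{x_k\}$ in $G$ together with continuity of $f$ forces $f\equiv0$. (Equivalently, one can argue through the isomorphism $B\cong A/\mathcal{K}$: $a\in\mathcal{K}$ iff $[a]=0$, and the kernel of $[\,\cdot\,]$ on $\mathcal{A}$ is exactly the span of the $P_{k,l}$.) The remaining diagonal pieces $a^\pm_n(\K)$ with coefficients in $c_{00}(\Z_{\ge0})$ are automatically finite rank, so $\mathcal{A}\cap\mathcal{K}$ is as claimed.

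The central computation would then be to show $d(P_0)\in\mathcal{K}$, using only that $P_0$ is a rank-one projection. Applying the Leibniz rule to $P_0=P_0^2$ gives
\[d(P_0)=d(P_0^2)=P_0\,d(P_0)+d(P_0)\,P_0,\]
and each summand has rank at most one because $P_0$ does, so $d(P_0)$ has rank at most two and in particular lies in $\mathcal{K}$ (along the way I would note $d(I)=0$, since $d(I)=2d(I)$). To finish I would expand
\[d(P_{k,l})=d(U^k)\,P_0(U^*)^l+U^k\,d(P_0)\,(U^*)^l+U^kP_0\,d\big((U^*)^l\big).\]
The middle term is compact because $\mathcal{K}$ is an ideal in $A$ and $d(P_0)\in\mathcal{K}$, while the outer two terms are finite rank since each contains the rank-one factor $P_0(U^*)^l$ or $U^kP_0$, and multiplication by bounded operators preserves finite rank. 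Linearity and the first step then give $d(\mathcal{A}\cap\mathcal{K})\subseteq\mathcal{K}$.

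I expect the genuine work to be in the first step, identifying $\mathcal{A}\cap\mathcal{K}$: the computation $d(P_0)\in\mathcal{K}$ is immediate once the rank-one projection is isolated, but ruling out that some nontrivial combination of multiplication operators and shifts is accidentally compact requires the invariance of $\mathcal{K}$ under the circle action together with the elementary fact that $M^+_f$ is compact only for $f=0$.
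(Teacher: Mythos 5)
Your proof is correct and follows essentially the same route as the paper: identify $\mathcal{A}\cap\mathcal{K}$ with the span of the operators $U^kP_0(U^*)^l$, get $d(P_0)\in\mathcal{K}$ by applying the Leibniz rule to $P_0=P_0^2$, and conclude using linearity, Leibniz, and the ideal property of $\mathcal{K}$. The only difference is that you justify the identification of $\mathcal{A}\cap\mathcal{K}$ (via invariance of $\mathcal{K}$ under $\rho^{\K}_\theta$ and the fact that $M_f^+$ is compact only for $f\equiv 0$), a point the paper's proof simply asserts.
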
 
 
\begin{proof}
It is enough to prove that $d(P_0)$ is compact, where $P_0$ is the orthogonal projection onto the one-dimensional subspace spanned by $E^+_0$, because $\mathcal A\cap\mathcal K$ is comprised of linear combinations of expressions of the form $U^lP_0(U^*)^j$ and compactness would follow immediately from the Leibniz property. To see that $d(P_0)$ is compact, apply $d$ to both sides of the relation $P_0=P_0^2$ to obtain:
\begin{equation*}
d(P_0)=d(P_0)P_0+P_0d(P_0)\in \mathcal{K},
\end{equation*}
which completes the proof.
\end{proof}
 
As a consequence of Proposition \ref{der_preserve_compact}, if $d:\mathcal{A}\to A$ is a derivation in $A$, then $[d]: \mathcal{B}\to B$ defined by
\begin{equation*} 
[d](a+\mathcal{K}) := [d(a)]
\end{equation*}
gives a derivation in $B$, which, by Corollary \ref{structure_cor}, is defined on $\mathcal{B}\cong[\mathcal{A}]$.  

As a consequence to Proposition \ref{der_preserve_compact}, we have: 
$$[d_\K]=\delta_{\mathbb{L}}.$$   
Clearly, if $d$ is an approximately inner derivation, then so is $[d]$.  In general, given a derivation $\delta:\mathcal{B}\to B$, if there exists a derivation $d:\mathcal{A}\to A$ such that $[d]=\delta$ we call such a $d$ a {\it lift} of $\delta$.

A natural question is: which derivations $\delta:\mathcal{B}\to B$ can be lifted to a derivation $d:\mathcal{A}\to A$?  It follows from Theorems \ref{der_decomp_A} and \ref{der_decomp_B} that if there is a nonzero $\varphi$ invariant derivation in $C(G)$, $\partial:\mathcal{F}\to C(G)$, then there is no $d:\mathcal{A}\to A$ such that $[d]=\delta_\partial$, because $\delta_\partial$ is not approximately inner.  A natural example of this is $G=\T^1=\R/\Z$ with $x_k = \theta k\ (\textrm{mod }\Z)$, $k\in\Z$ and $\theta$ irrational, giving a dense subgroup of $\T^1$.  In this case, $\mathcal{F}$ is the actual space of trigonometric polynomials.  Any derivation $\partial:\mathcal{F}\to C(\T^1)$ invariant with respect to $\varphi(x) = x + \theta\ (\textrm{mod }\Z)$ is of the form:
\begin{equation*}
\partial(f) = \textrm{const }\frac{d}{dx}f(x).
\end{equation*}
In this case, the algebra $B$ is generated by $V$ and $W=M_{e^{2\pi ix}}$ satisfying the relation
\begin{equation*}
VW = e^{2\pi i\theta}WV.
\end{equation*}
Consequently, $B$ is isomorphic with the irrational rotation algebra.  $\mathcal{B}$ is the algebra of polynomials in $V$ and $W$ and the derivation $\delta_{d/dx}:\mathcal{B}\to B$ is given on generators by
\begin{equation*}
\delta_{d/dx}(V) = 0\textrm{ and }\delta_{d/dx}(W) = 2\pi i W
\end{equation*}
and it cannot be lifted to a derivation in $A$.  The key reason is that there is an additional relation on $A$ given by equation (\ref{M_with_P_zero}) which prevents existence of such a lift.  We conjecture however, that for any compact infinite monothetic group, any approximately inner derivation $\delta:\mathcal{B}\to B$ can be lifted to a derivation $d:\mathcal{A}\to A$.

For the remainder of the section we let $G$ be totally disconnected, in other words $G$ is an odometer, and thus by Proposition \ref{tot_disc_mono}, there exists an infinite supernatural number $N$ such that $G\cong \Z/N\Z$.  It was proved in \cite{KMRSW2} that for such $G$'s, the algebras $A$ and $B$ are precisely the Bunce-Deddens-Toeplitz, $A(N)$, and Bunce-Deddens algebras, $B(N)$, respectively.  It follows from Theorem 4.4 in \cite{KMRSW2} that there are no nontrivial $\varphi$ invariant derivations $\partial:\mathcal{F}\to C(G)$.  Below we prove one of the main results of this paper that for odometers, any unbounded derivation in $B(N)$ can be lifted to an unbounded derivation in $A(N)$.

We will need the following useful result for computing Hilbert-Schmidt norms of operators in $\ell^2(\Z)$ and $\ell^2(\Z_{\ge0})$.  Since below we work mostly with algebra $A$, we only state the corresponding version for brevity.

\begin{prop}\label{HSProp}
Let $a:\ell^2(\Z_{\ge0})\to\ell^2(\Z_{\ge0})$ be defined by:
\begin{equation*}
a = \sum_{n=0}^\infty U^na_n(\K) + \sum_{n=-\infty}^{-1}a_n(\K)(U^*)^{-n},
\end{equation*}
where $\{a_n(k)\}_{n\in\Z, k\in \Z_{\ge0}}\in\ell^2(\Z\times\Z_{\ge0})$.  Then $a$ is an integral operator with the Hilbert-Schmidt norm given by:
\begin{equation*}
\|a\|_{\textrm{HS}}^2 = \sum_{n\in\Z}\sum_{k=0}^\infty |a_n(k)|^2.
\end{equation*}
\end{prop}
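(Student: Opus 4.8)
The plan is to compute the matrix of $a$ in the canonical orthonormal basis $\{E_k^+\}_{k\ge0}$ of $\ell^2(\Z_{\ge0})$ and then read off the Hilbert--Schmidt norm as the $\ell^2$-sum of its entries, the point being that $\|a\|_{\textrm{HS}}^2$ is by definition $\sum_{j,k\ge0}|\langle E_j^+, aE_k^+\rangle|^2$. First I would record how each summand acts on a basis vector. For $n\ge0$ the operator $U^na_n(\K)$ sends $E_k^+\mapsto a_n(k)E_{k+n}^+$, so it contributes $a_n(k)$ to the entry in row $k+n$, column $k$. For $n<0$, writing $m=-n$, the operator $a_n(\K)(U^*)^{m}$ sends $E_k^+\mapsto a_n(k-m)E_{k-m}^+$ when $k\ge m$ and to $0$ otherwise, so it contributes $a_n(k-m)$ to the entry in row $k-m$, column $k$. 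In every case the contribution lands in an entry $(j,k)$ with $j-k=n$, and since $n$ is exactly the difference $j-k$, distinct values of $n$ populate disjoint diagonals; in particular the $n\ge0$ and $n<0$ families never interfere.

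Consequently, for a fixed pair $(j,k)$ with $j,k\ge0$ exactly one term survives, namely the one with $n=j-k$, so I can read the matrix coefficient directly: $\langle E_j^+, aE_k^+\rangle = a_{j-k}(k)$ when $j\ge k$, and $\langle E_j^+, aE_k^+\rangle = a_{j-k}(j)$ when $j<k$. Before squaring, I would first justify that these coefficients genuinely assemble into a bounded operator coinciding with $a$: since $\{a_n(k)\}\in\ell^2(\Z\times\Z_{\ge0})$, the doubly indexed family of matrix entries is square summable, hence it is the kernel of a Hilbert--Schmidt (in particular bounded) operator, and the partial sums of the defining series converge to it in Hilbert--Schmidt norm. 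This identification is what realizes $a$ as a kernel (integral) operator, and it is the one point requiring a little care, since it legitimizes treating the formal series termwise.

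Finally I would evaluate $\|a\|_{\textrm{HS}}^2=\sum_{j,k\ge0}|\langle E_j^+, aE_k^+\rangle|^2$ by grouping the entries along the diagonals $n=j-k$. For $n\ge0$ the diagonal is parametrized by $k\ge0$ with value $a_n(k)$, giving $\sum_{k\ge0}|a_n(k)|^2$. For $n<0$ the diagonal is parametrized by $k\ge m=-n$ with value $a_n(k-m)$, and reindexing by $k'=k-m\ge0$ turns this into $\sum_{k'\ge0}|a_n(k')|^2$. Summing over all $n\in\Z$ yields the asserted formula $\|a\|_{\textrm{HS}}^2=\sum_{n\in\Z}\sum_{k=0}^\infty|a_n(k)|^2$. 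The whole argument is essentially bookkeeping of index shifts; the only genuinely nontrivial step is the convergence/well-definedness issue noted above, and everything else is the elementary matrix identification of these shift-and-diagonal operators.
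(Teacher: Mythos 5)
Your proposal is correct and follows essentially the same route as the paper: both compute the action of $a$ on the canonical basis, identify $a$ as a kernel operator with matrix entries $\langle E_j^+, aE_k^+\rangle = a_{j-k}(\min\{j,k\})$ (the paper writes this as the kernel $\kappa(k,n)=a_{n-k}(\min\{n,k\})$ obtained by resumming $af$), and then obtain the Hilbert--Schmidt norm as the $\ell^2$-sum of the kernel, which your diagonal-by-diagonal grouping reproduces exactly. If anything, your explicit remark that square-summability of the entries legitimizes the termwise treatment of the series is a point the paper passes over silently.
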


\begin{proof}
Write $f\in\ell^2(\Z_{\ge0})$ in the canonical basis:
\begin{equation*}
f = \sum_{k=0}^\infty f(k)E_k^+.
\end{equation*}
Applying the formula for $a$ to  $f$ yields:
\begin{equation*}
af = \sum_{n\ge0}\sum_{k\ge0}a_n(k)f(k)E_{k+n}^+ + \sum_{0\le k+n}\sum_{n<0}a_n(k+n)f(k)E_{k+n}^+.
\end{equation*}
Resumming both terms gives:
\begin{equation*}
\begin{aligned}
af &= \sum_{n\ge0}\sum_{n\ge k}a_{n-k}(k)f(k)E_n^+ + \sum_{n\ge0}\sum_{n<k}a_{n-k}(n)f(k)E_n^+\\
&= \sum_{n\ge0}\left(\sum_{k=0}^\infty a_{n-k}(\textrm{min}\{n,k\})f(k)\right)E_n^+.
\end{aligned}
\end{equation*}
This shows that $a$ is an integral operator with integral kernel 
\begin{equation*}
\kappa(k,n) = a_{n-k}(\textrm{min}\{n,k\})\in\ell^2(\Z\times\Z_{\ge0}).
\end{equation*}
Therefore, by writing $a$ in the following way
\begin{equation*}
(af)(n) = \sum_{k=0}^\infty\kappa(k,n)f(k) = \sum_{k=0}^na_{n-k}(k)f(k) + \sum_{k>n}a_{n-k}(n)f(k),
\end{equation*}
the Hilbert-Schmidt norm formula now follows, completing the proof.
\end{proof}

\begin{theo}\label{lift_theo}
Let $\delta:\mathcal{B}(N)\to B(N)$ be any derivation.  There exists a derivation $d:\mathcal{A}(N)\to A(N)$ such that $[d]=\delta$.
\end{theo}
\begin{proof}
Let $\delta:\mathcal{B}(N)\to B(N)$ be an approximately inner derivation in $B(N)$, then by Theorem \ref{der_decomp_B} and by Propositions \ref{invariant_der_B} and \ref{covariant_der_B} we have
\begin{equation*}
\delta(b) = \left[\sum_{n\ge0}V^nM_{f_n} + \sum_{n<0}M_{f_n}V^n,b\right]
\end{equation*}
for $b\in\mathcal{B}(N)$, where the convergence of infinite sums is understood as being densely pointwise on $c_{00}(\Z)$.  In order to construct a lift of $\delta$ we need to consider derivations $d:\mathcal{A}(N)\to A(N)$ given by the following expression, densely pointwise convergent on $c_{00}(\Z_{\geq 0})$:
\begin{equation*}
d(a) = \left[\sum_{n\ge0}U^n\beta_n(\K) + \sum_{n<0}\beta_n(\K)(U^*)^{-n},a\right]
\end{equation*}
for $a\in\mathcal{A}(N)$, where $\beta_n(k)$ has the following decomposition: 
$$\beta_n(\K) = \beta_{n,0}(\K) + M^+_{f_n}$$ 
where $\beta_{n,0}(k)\in c_0(\Z_{\geq 0})$.   We need to find conditions on $\beta_{n,0}(k)$ so that $d$ is a well-defined derivation in $A(N)$ such that 
$$[d](a)=\delta([a])$$  
for all $a\in\mathcal A(N)$.
By the Leibniz rule we only need to check this equation on the generators $U$, $U^*$, and $M^+_\chi$, where $\chi$ is a character on $\Z/N\Z$.

A direct computation yields the following formula:
\begin{equation*}
\begin{aligned}
&d(U) - T(\delta(V)) = \sum_{n\ge0}U^{n+1}\left(\beta_{n,0}(\K+I)-\beta_{n,0}(\K)\right) \\
&\quad+ \sum_{n<0}\left(\beta_{n,0}(\K)-\beta_{n,0}(\K-I)\right)(U^*)^{-n-1} + P_0\left(\sum_{n<0}M^+_{f_{n}\circ\varphi^{-1}}(U^*)^{-n-1}\right)
\end{aligned}
\end{equation*}
Similarly, on $U^*$ we have:
\begin{equation*}
\begin{aligned}
&d(U^*) - T(\delta(V^{-1})) = \sum_{n>0}U^{n-1}\left(\beta_{n,0}(\K-I)-\beta_{n,0}(\K)\right) \\
&\quad+ \sum_{n\le0}\left(\beta_{n,0}(\K)-\beta_{n,0}(\K+1)\right)(U^*)^{-n+1} + \left(\sum_{n>0}U^{n-1}M^+_{f_{n}\circ\varphi^{-1}}\right)P_0
\end{aligned}
\end{equation*}
Finally, we get the following expression for diagonal operators $M^+_\chi$:
\begin{equation*}
\begin{aligned}
&d(M^+_\chi) - T(\delta(M_\chi)) = \sum_{n\ge0}U^n\beta_{n,0}(\K)(M^+_\chi-M^+_{\chi\circ\varphi^n}) \\
&\quad+ \sum_{n<0}\beta_{n,0}(\K)(M^+_{\chi\circ\varphi^n}-M^+_\chi)(U^*)^{-n}.
\end{aligned}
\end{equation*}
The result follows provided we can choose $\beta_{n,0}(k)$ so that the right-hand sides of the above equations are compact operators.  We compute the Hilbert-Schmidt norm of the above operators to show the compactness.  A direct calculation using Proposition \ref{HSProp} yields the following formulas:
\begin{equation*}
\begin{aligned}
I&:=\|d(M^+_\chi) - T(\delta(M_\chi))\|^2_{\textrm{HS}} = \sum_{n\in\Z}\sum_{k\ge0}|\beta_{n,0}(k)|^2|\chi(x_k)-\chi(x_{k+n})|^2 \\
II&:=\|d(U) - T(\delta(V))\|^2_{\textrm{HS}} = \sum_{n\in\Z}\sum_{k\ge1}|\beta_{n,0}(k)-\beta_{n,0}(k-1)|^2 + \sum_{n\in\Z}|\beta_{n,0}(0)-f_n(x_{-1})|^2\\
&\qquad\qquad\qquad\qquad\qquad\,\,\,=\|d(U^*) - T(\delta(V^{-1}))\|^2_{\textrm{HS}}.
\end{aligned}
\end{equation*}
 We define $\beta_{n,0}(k)$ to have the following form:
\begin{equation*}
\beta_{n,0}(k)=\left\{
\begin{aligned}
&-f_n(x_{-1})\left(\frac{N_n-k}{N_n}\right) &&0\le k<N_n\\
&0 &&k\ge N_n,
\end{aligned}\right.
\end{equation*}
where the numbers $N_n$ will be chosen later.

Notice that any character on $\Z/N\Z$ is of the form:
$$\chi(x_k) = \textrm{exp}\left(\frac{2\pi ijk}{M}\right),$$ 
where $M\mid N$ and $j\in\Z$.  Therefore $I$ and $II$ become
\begin{equation*}
\begin{aligned}
I&=\sum_{n\in\Z}\sum_{k=0}^{N_n}|f_n(x_{-1})|^2\left(\frac{N_n-k}{N_n}\right)^2|1-e^{2\pi ijn/M}|^2\\
II&=\sum_{n\in\Z}\sum_{k=0}^{N_n-1}\frac{|f_n(x_{-1})|^2}{N_n^2} = \sum_{n\in\Z}\frac{|f_n(x_{-1})|^2}{N_n}.
\end{aligned}
\end{equation*}

The key observation used below is that the coefficients $f_n$ on the Fourier decomposition of the derivation $\delta:\mathcal B\to B$ satisfly the following condition:
for all $M\mid N$:
\begin{equation}\label{hs_condition}
\sum_{M\nmid n}|f_n(x_{-1})|^2<\infty.
\end{equation}
This follows from the formula:
\begin{equation*}
\sum_{n\ge0}|f_n(x_{-1})|^2|1-e^{2\pi ijn/M}|^2= \|P_{\geq0}\delta(\textrm{exp}(2\pi ij\mathbb{L}/M)P_{-1}\|^2_{\textrm{HS}}<\infty,
\end{equation*}
and a similar formula for $n<0$:
\begin{equation*}
\sum_{n\in\Z}|f_n(x_{-1})|^2|1-e^{2\pi ijn/M}|^2=\|P_{-1}\delta(\textrm{exp}(2\pi ij\mathbb{L}/M)P_{\geq0}\|^2_{\textrm{HS}}<\infty.
\end{equation*}
Here $P_{-1}$ is the orthogonal projection in $\ell^2(\Z)$ onto the one-dimensional subspace spanned by $E_{-1}$, while $P_{\geq0}$ is the orthogonal projection onto the subspace spanned by $\{E_{l}\}_{l\geq0}$.
Equations above imply that we have:
\begin{equation*}
\begin{aligned}
\infty&>\sum_{n\in\Z}|f_n(x_{-1})|^2|1-e^{2\pi ijn/M}|^2 = \sum_{M\nmid n}|f_n(x_{-1})|^2|1-e^{2\pi ijn/M}|^2\\
&>\textrm{const}\sum_{M\nmid n}|f_n(x_{-1})|^2,
\end{aligned}
\end{equation*}
since the factor $1-e^{2\pi ijn/M}$ has only finitely many values.  This gives the following estimate:
\begin{equation*}
\begin{aligned}
I&=\sum_{n\in\Z}\sum_{k=0}^{N_n}|f_n(x_{-1})|^2\left(\frac{N_n-k}{N_n}\right)^2|1-e^{2\pi ijn/M}|^2\\
&\le4\sum_{M\nmid n}\sum_{k=0}^{N_n}|f_n(x_{-1})|^2\left(\frac{N_n-k}{N_n}\right)^2\sim\textrm{const} \sum_{M\nmid n}N_n|f_n(x_{-1})|^2.
\end{aligned}
\end{equation*}


To proceed further we choose a scale  $s = (s_m)_{m\in\N}$ for the supernatural number $N$, which  is a sequence of positive integers such that $s_m$ divides $s_{m+1}$, $s_m<s_{m+1}$, and such that $N=\lim_m s_m$, see \eqref{N_limit}. For every $n\in\Z$ there is an index $m$ such that $s_m\mid n$ but $s_{m+1}\nmid n$. We then write  
$$n=s_mn',$$
where $n'$ is such that $s_{m+1}/s_m\nmid n'$.  Using this decomposition we choose $N_n =C_m$ to be a constant depending on $m$ only, to be determined later. Also, without loss of generality, we can choose $M$, in the formula for the character $\chi$, to be equal to one of the elements of the scale: $M=s_q$. It is then important to notice that $s_q\nmid n=s_mn'$ if and only if $m<q$.
Consequently, we have the following expressions:
\begin{equation*}
\begin{aligned}
I&\le\textrm{const}\sum_{m=1}^{q-1} C_m\sum_{s_{m+1}/s_m\nmid n'}|f_{s_mn'}(x_{-1})|^2 =\textrm{const}\sum_{m=1}^{q-1} C_m\sum_{s_m\mid n, \,s_{m+1}\nmid n}|f_{n}(x_{-1})|^2 \\
&\le \textrm{const}\sum_{m=1}^{q-1} C_m\sum_{s_{m+1}\nmid n}|f_{n}(x_{-1})|^2 <\infty
\end{aligned}
\end{equation*}
for any choice of $C_m$ because the sum over $s_{m+1}\nmid n$ is finite by equation (\ref{hs_condition}).

Next, for $II$ we have an estimate:
\begin{equation*}
II = \sum_{m=1}^{\infty} \frac{1}{C_m}\sum_{s_m\mid n, \,s_{m+1}\nmid n}|f_{n}(x_{-1})|^2\le \sum_{m=1}^{\infty} \frac{1}{C_m}\sum_{s_{m+1}\nmid n}|f_{n}(x_{-1})|^2.
\end{equation*}
By equation (\ref{hs_condition}) the interior sum is finite. Finally, we can always choose $C_m$ large enough so that $II<\infty$.  This completes the proof.
\end{proof}

\end{document}